\documentclass[12pt,oneside,a4paper,reqno]{amsart}
\usepackage{latexsym,amsxtra,amsthm, amsmath, amsfonts, amscd, amssymb, verbatim}
\usepackage{amsfonts}
\usepackage[portrait, top=2.5 cm, bottom=3cm, left=3.5cm, right=2cm] {geometry} 

\usepackage{varioref}
\usepackage{fancyhdr}
\usepackage{longtable}
\usepackage{fancybox}
\usepackage{color, graphicx}
\usepackage{verbatim}
\usepackage{setspace}

\usepackage[latin1]{inputenc}
\usepackage[english]{babel}

\newtheorem{prop} {Proposition} [section]
\newtheorem{thm}[prop] {Theorem}
\newtheorem{defi}[prop] {Definition}
\newtheorem{lem}[prop] {Lemma}
\newtheorem{cor}[prop]{Corollary}

\newtheorem{prop-def}[prop]{Proposition-Definition}

\setlength{\baselineskip}{16truept}

\begin{document}

\title[rational points on complete intersections of two quadric surfaces]{Uniform bounds for rational points on complete intersections of two quadric surfaces}

\author{Manh Hung Tran}
\address{Chalmers University of Technology and University of Gothenburg \\
 Sweden}\email{manhh@chalmers.se}

\maketitle

\begin{abstract}
We give uniform upper bounds for the number of rational points of height at most $B$ on non-singular complete intersections of two quadrics in $\mathbb{P}^3$ defined over $\mathbb{Q}$. To do this, we combine determinant methods with descent arguments.
\end{abstract}

\tableofcontents

\section{\textbf{Introduction}}
Let $C$ be a non-singular complete intersection of two quadrics in $\mathbb{P}^3$ defined by $$q(x_0,x_1,x_2,x_3)=r(x_0,x_1,x_2,x_3)=0,$$ where $q$ and $r$ are quadratic forms in $\mathbb{Z}[x_0,x_1,x_2,x_3] $. Thus $C$ is of genus 1 and related to elliptic curves. We want to find uniform upper bounds for the counting function
$$N(B):=\sharp \{P\in C(\mathbb{Q}): H(P)\leq B\},$$
where the naive height function $H(P):=$max$\{|x_0|,|x_1|,|x_2|,|x_3|\}$ for $P=[x_0,x_1,x_2,x_3]$ with coprime integer values of $x_0,x_1,x_2,x_3.$ The first result of this paper is the following.

\begin{thm} Let $C$ be a non-singular complete intersection of two quadrics in $\mathbb{P}^3$ and $r$ be the rank of the Jacobian $Jac(C)$. Then for any $B\geq 3$ and any positive integer $m$ we have
$$N(B)\ll m^r \left(B^{\frac{1}{2m^2}}+m^2\right)\log B$$
uniformly in $C$, with an implied constant independent of $m$. \end{thm}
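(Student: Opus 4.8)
The plan is to marry the Mordell--Weil structure of $\mathrm{Jac}(C)$ to the uniform determinant method, using multiplication-by-$m$ to convert many points of large height into few points of small height. I may assume $C(\mathbb{Q})\neq\emptyset$, for otherwise $N(B)=0$ and there is nothing to prove. Fixing a rational point identifies $C$ with its Jacobian $E=\mathrm{Jac}(C)$, and $E(\mathbb{Q})\cong\mathbb{Z}^{r}\oplus E(\mathbb{Q})_{\mathrm{tors}}$. By Mazur's theorem the torsion subgroup has absolutely bounded order, so $[E(\mathbb{Q}):mE(\mathbb{Q})]\ll m^{r}$ with an absolute implied constant. I would therefore split $C(\mathbb{Q})$ into $\ll m^{r}$ cosets, each of the shape $P_{0}+mE(\mathbb{Q})$, and bound $N(B)$ by $m^{r}$ times the largest number of points of height at most $B$ lying in a single coset.

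Fix such a coset and write its points as $P=P_{0}+[m]Q$ with $Q\in E(\mathbb{Q})$. The decisive feature is that the morphism $\psi=\tau_{P_{0}}\circ[m]\colon E\to C$ has degree $m^{2}$, so that the naive height satisfies $h(\psi(Q))=m^{2}h(Q)+O(1)$; hence a point $P$ with $H(P)\le B$ arises from a point $Q$ with $H(Q)\ll B^{1/m^{2}}$. Since $\psi$ is finite, the number of points in the coset is dominated by the number of $Q\in E(\mathbb{Q})$ with $H(Q)\ll B^{1/m^{2}}$, and it now suffices to count the latter uniformly in $C$ and in $m$.

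For this count I would invoke the global determinant method in its uniform form: for a geometrically integral curve of degree $d$ in $\mathbb{P}^{3}$ the number of rational points of height at most $Y$ is $\ll Y^{2/d}\log Y$, the implied constant depending only on the degree and the ambient dimension and thus being uniform in the curve. Taking $d=4$ and $Y\asymp B^{1/m^{2}}$ produces $\ll B^{1/(2m^{2})}\log B$ points. To this one must add a lower-order term of size $O(m^{2})$ absorbing the small-height range, where the expansion $h(\psi(Q))=m^{2}h(Q)+O(1)$ no longer forces $H(Q)$ to be small; this yields $\ll\bigl(B^{1/(2m^{2})}+m^{2}\bigr)\log B$ points per coset. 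Multiplying by the $\ll m^{r}$ cosets gives the asserted bound.

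The step I expect to be the real obstacle is the second one: the implicit $O(1)$ in $h(\psi(Q))=m^{2}h(Q)+O(1)$ depends a priori on the division polynomials of $E$, hence on $C$, whereas the theorem insists on a constant that is simultaneously independent of $C$ and of $m$. Overcoming this requires not passing through the canonical height---whose comparison with $h$ is notoriously curve-dependent---but instead realizing the covering $\psi$ by explicit, degree-controlled equations and feeding the resulting curve of bounded degree directly into the determinant estimate, whose insensitivity to the defining coefficients is precisely what furnishes the needed uniformity. Calibrating this so that the only surviving dependence on $m$ is the harmless factor $m^{2}$ in the height expansion, and none leaks into the implied constant, is the delicate heart of the argument.
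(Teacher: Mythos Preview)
Your descent step---partitioning $C(\mathbb{Q})$ into $\ll m^r$ cosets via Mazur and Mordell--Weil---matches the paper exactly. The divergence comes immediately after, and the obstacle you flag in your final paragraph is genuine and, in the form you propose to resolve it, not repaired.

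Your plan is to extract from $H(P)\le B$ a bound $H(Q)\ll B^{1/m^2}$ and then count the $Q$'s on the fixed degree-$4$ curve $C$. But the constant in $h([m]Q+P_0)=m^2 h(Q)+O(1)$ depends on both $C$ and $m$, and rewriting the covering by ``degree-controlled equations'' does not help: multiplication by $m$ on an elliptic curve is given by polynomials of degree $m^2$ with coefficients that grow with $m$ and with the curve, so any curve you produce this way will not have degree bounded independently of $m$. The paper does not attempt this height bound at all.

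Instead the paper first reduces (Lemma~2.1) to the case $\|q\|\|r\|\ll B^{160}$---at no cost, since otherwise $N(B)\le 8$---and then proves only the crude estimate $H(Q)\le B^A$ for an \emph{absolute} constant $A$ (Lemma~2.2), obtained by comparing naive and canonical heights with error $O(\log(\|q\|\|r\|))=O(\log B)$. No factor $1/m^2$ appears here.

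The exponent $1/(2m^2)$ arises from a different mechanism. The paper applies the global determinant method not to $C$ but to the biprojective curve
\[
X_R=\{(P,Q)\in C\times C:[P]=m[Q]-(m-1)[R]\}\subset\mathbb{P}^3\times\mathbb{P}^3,
\]
using bihomogeneous forms of bidegree $(a,b)$. The crucial asymmetry is that $X_R.(1,0)=4m^2$ while $X_R.(0,1)=4$, so the relevant space of sections has dimension $s=4(m^2a+b)$ (Lemma~2.3). With the crude bound $H(Q)\le B^A$ the entries of the determinant are $\le B^{a+Ab}$; choosing $b=m^2$ and $a\asymp B^{1/(2m^2)}\log B/m^2 + \log B$ forces the determinant to vanish, and B\'ezout on $X_R$ gives $N\le 4(m^2a+b)\ll(B^{1/(2m^2)}+m^2)\log B$. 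The saving comes entirely from the intersection theory on $X_R$ and the freedom to choose $a,b$ independently, not from any sharp height comparison for $Q$.

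So your instinct in the last paragraph---feed the covering curve itself into the determinant method---is the right one, but the implementation must be biprojective on $X_R\subset\mathbb{P}^3\times\mathbb{P}^3$ with separate control of the two degrees; a single projective curve of bounded degree does not exist here.
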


The proof follows the same strategy as in the paper [7] on non-singular cubic curves where the authors combine Heath-Brown's $p$-adic determinant method in [6] with descent theory. But we will follow the approach in [15] and replace the $p$-adic determinant method by Salberger's global determinant method [13]. Taking $m=1+[\sqrt{\log B}]$ we immediately obtain the following result.

\begin{cor}Under the condition above we have
$$N(B)\ll (\log B)^{2+r/2}$$
uniformly in $C$. \end{cor}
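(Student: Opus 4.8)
The plan is to deduce the corollary from the theorem by optimizing the free parameter $m$. Since the theorem holds for every positive integer $m$ with an implied constant independent of $m$, I am free to let $m$ depend on $B$. Inspecting the bound $m^r\bigl(B^{1/(2m^2)}+m^2\bigr)\log B$, I note a tension: enlarging $m$ shrinks the factor $B^{1/(2m^2)}=\exp\!\bigl(\tfrac{\log B}{2m^2}\bigr)$ but inflates both $m^r$ and $m^2$. The exponential factor collapses to a bounded quantity precisely once $m^2$ reaches the scale of $\log B$, after which increasing $m$ can only be harmful. This dictates taking $m$ to be the least integer exceeding $\sqrt{\log B}$.

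Concretely, I would set $m=1+[\sqrt{\log B}]$, a positive integer satisfying $\sqrt{\log B}<m\leq 1+\sqrt{\log B}$ for $B\geq 3$. The lower bound gives $2m^2>2\log B$, so that
$$B^{\frac{1}{2m^2}}<B^{\frac{1}{2\log B}}=e^{1/2},$$
and the term $B^{1/(2m^2)}$ is absorbed into an absolute constant. The upper bound gives $m<2\sqrt{\log B}$ for $B\geq 3$, hence $m^2\ll\log B$ with an absolute constant and $m^r\ll(\log B)^{r/2}$ (the constant here, of size roughly $2^r$, depending on $r$, consistent with $r$ appearing explicitly in the exponent of the final bound).

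Substituting these estimates into the theorem yields
$$N(B)\ll(\log B)^{r/2}\bigl(e^{1/2}+\log B\bigr)\log B\ll(\log B)^{r/2}\cdot\log B\cdot\log B=(\log B)^{2+r/2},$$
as claimed. The only point meriting a word of justification is that $m=1+[\sqrt{\log B}]$ is, up to constants, the genuine minimizer: any choice with $m^2=o(\log B)$ leaves $B^{1/(2m^2)}$ unbounded — indeed a positive power of $B$ when $m=O(1)$ — swamping all logarithmic savings, whereas any $m$ much larger than $\sqrt{\log B}$ needlessly enlarges the dominant factor $m^{r+2}$. Since the corollary is a direct specialization of the theorem, I anticipate no real obstacle; the entire substance resides in the theorem itself.
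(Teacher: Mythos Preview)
Your proof is correct and matches the paper's approach exactly: the paper simply states that taking $m=1+[\sqrt{\log B}]$ yields the corollary, and you have supplied the routine verification of this choice.
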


The upper bounds in Theorem 1.1 are uniform in the sense that the implicit constants only depend on the rank of the Jacobian. We will also use another approach to improve the uniformity and establish upper bounds which do not depend on the rank of Jac($C$). In this direction, Heath-Brown [6] obtained the bound $N(B)\ll_{\varepsilon} B^{1/2+\varepsilon}$ by using his $p$-adic determinant method. Salberger [13] proved a slightly better estimate \linebreak $N(B)\ll B^{1/2}\text{log }B.$

The aim of this paper is to improve these bounds for a class of such curves $C$ in $\mathbb{P}^3$ by using Theorem 1.1 and a refinement of the $p$-adic determinant method. We shall prove the following theorem.

\begin{thm} Let $\delta < 3/392$ and $C$ be a non-singular complete intersection in $\mathbb{P}^3$ defined by two simultaneously diagonal quadratic forms $q$ and $r$, where
$$q(x_0,x_1,x_2,x_3)=a_0 x_0^2+a_1 x_1^2 +a_2 x_2^2 + a_3 x_3^2,$$
$$r(x_0,x_1,x_2,x_3)=b_0 x_0^2+b_1 x_1^2 +b_2 x_2^2 + b_3 x_3^2$$
with integral coefficients $a_i, b_i$. Then $$N(B)\ll B^{1/2-\delta},$$
where the implicit constant depends solely on $\delta$ and not on the coefficients of $q$ and $r$. \end{thm}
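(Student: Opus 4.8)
The plan is to combine the explicit descent coming from the diagonal shape of $q$ and $r$ with a refinement of the $p$-adic determinant method, using Theorem 1.1 to absorb the curves of small Jacobian rank. First I would linearise. Setting $u_i=x_i^2$, the two equations turn into the pair of linear relations $a_0u_0+\dots+a_3u_3=0$ and $b_0u_0+\dots+b_3u_3=0$, so the point $[x_0^2:x_1^2:x_2^2:x_3^2]$ is forced onto a fixed line $L\cong\mathbb{P}^1$ in $\mathbb{P}^3$. Parametrising $L$ by $[s:t]$ produces four integral linear forms $\ell_0,\dots,\ell_3$, and a rational point $P=[x_0:x_1:x_2:x_3]$ on $C$ with $H(P)\le B$ corresponds, after removing common factors, to an integer pair $(s,t)$ with $|s|,|t|\ll B^2$ for which $\ell_0(s,t),\dots,\ell_3(s,t)$ are all, up to fixed squarefree factors, perfect squares. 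This is exactly the descent attached to the four rational singular fibres $[-b_i:a_i]$ of the pencil $\lambda q+\mu r$, and it reduces the estimation of $N(B)$ to counting lattice points in a box on which several fixed linear forms are simultaneously square.

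Next I would split according to the rank $r$ of $Jac(C)$. When $r$ is small, Theorem 1.1 already suffices: taking $m$ equal to a small power of $B$ drives $m^{r}\bigl(B^{1/(2m^{2})}+m^{2}\bigr)\log B$ below $B^{1/2-\delta}$, uniformly and with no reference to the coefficients. Since the Mordell--Weil rank is bounded by the $2$-Selmer rank and hence by the number of bad primes, a large value of $r$ forces the coefficients $a_i,b_i$, and with them the discriminant of $Jac(C)$, to be large. It is on this surviving family that I would expend the real effort, and for it the determinant method is exactly the right tool.

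For that family I would run the $p$-adic determinant method of Heath-Brown [6], in the global shape of Salberger [13] and along the lines of [15]. One sorts the points of height at most $B$ into residue classes modulo a carefully chosen prime $p$, shows that in each class they lie on an auxiliary hypersurface of bounded degree $D$, and then bounds the number of points per class by $O(D)$ via B\'ezout, summing over the $O(p)$ relevant classes. The refinement is to exploit the square structure found above: because each surviving pair $(s,t)$ carries four square conditions, the monomials that enter the interpolation determinant form a much sparser set than for a general degree-$4$ curve, so the determinant is smaller and one may take $p$ to be a larger power of $B$ for the same auxiliary degree. This is what produces a genuine power saving below the exponent $1/2$ of Heath-Brown and Salberger. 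Optimising $p\asymp B^{\theta}$ against $D$, and balancing the outcome against the contribution that Theorem 1.1 controls, yields the admissible saving for every $\delta<3/392$.

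I expect the main difficulty to lie in this last step. Every implied constant must be kept independent of $a_i,b_i$; the number of admissible residue classes and the degree of the auxiliary forms must be bounded purely in terms of $B$ and the optimisation parameters; and one must check that the auxiliary forms never contain $C$, so that B\'ezout genuinely applies. The delicate point is to make the sparsity of the square monomials register as an actual reduction in the size of the determinant, and it is precisely this optimisation---against the rank term supplied by Theorem 1.1---that pins the exponent at the stated value $3/392$.
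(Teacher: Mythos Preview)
Your dichotomy---handle small rank by Theorem 1.1, and for large rank use that the discriminant and hence the coefficients must be large---is exactly the skeleton of the paper's argument. Your linearisation $[x_0^2:\dots:x_3^2]\in L$ is also the right observation; it is what underlies the key Lemma 3.4. But the mechanism you propose for the determinant refinement is not the one that works, and as stated it does not yield a saving.

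You say the square conditions make ``the monomials that enter the interpolation determinant form a much sparser set'', so the determinant is smaller. This is not correct: the space $S_{2k}$ of degree-$2k$ forms on $C$ still has dimension $8k$ by Riemann--Roch, regardless of the diagonal shape, so the interpolation matrix has the same size and the trivial upper bound $|\det M_{2k}|\le (8k)^{4k}B^{16k^2}$ is unchanged. The actual refinement is on the \emph{divisibility} side, not the size side. One introduces the Pl\"ucker height $H(C)=\max_{i<j}|a_ib_j-a_jb_i|/\gcd(\dots)$ and proves (Proposition 3.5) that with a carefully chosen monomial basis of $S_{2k}$, the determinant is divisible by $H(C)^{4k^2-4k+1}$. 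This comes precisely from your line $L$: for two rational points $P,Q$ on $C$ one has $x_k^2y_l^2-x_l^2y_k^2=\lambda(a_ib_j-a_jb_i)$ for complementary index pairs (Grassmann/dual-Grassmann coordinates of $L$), and a Vandermonde expansion of $\det M_{2k}$ collects a product of such differences. Combined with the usual factor $p^{4k(8k-1)}$ this gives $N(B)\ll_\varepsilon B^{1/2+\varepsilon}/H(C)^{1/8}+\log B$.

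On the other side of the dichotomy the paper takes $m=2$ in Theorem 1.1, not $m$ a power of $B$: this gives $N(B)\ll 2^rB^{1/8}\log B$, and the $2$-descent bound $r<c\log|D|$ together with $|D|\le H(C)^{12}$ yields $N(B)\ll_\varepsilon H(C)^{6+\varepsilon}B^{1/8}$. Balancing $H(C)^{6}B^{1/8}$ against $B^{1/2}/H(C)^{1/8}$ gives the crossover $H(C)=B^{3/49}$ and the exponent $1/2-3/392$. Your proposal never isolates $H(C)$ as the quantity governing both sides, and without the divisibility statement you have no leverage on the large-coefficient regime; the ``sparsity'' heuristic should be replaced by this Pl\"ucker--Vandermonde divisibility.
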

This class contains examples of elliptic curves with arbitrary $j$-invariants.

\section{\textbf{Proof of Theorem 1.1}}
We shall in this section follow the approach for non-singular cubic curves in [15], where the author combined the global determinant method developed by Salberger [13] and the descent method of Heath-Brown and Testa [7]. The difference is that we now study non-singular quartic curves of genus 1 in $\mathbb{P}^3$. We first use descent to reduce the study of $N(B)$ to a counting problem for certain biprojective curves.

Let $\psi : C\times C \rightarrow \text{Jac}(C)$ be the morphism to the Jacobian of $C$ defined by \linebreak $\psi(P,Q)=[P]-[Q].$ Let $m$ be a positive integer and define an equivalence relation on $C(\mathbb{Q})$ as follows: $P\sim_m Q$ if $\psi(P,Q)\in m(\text{Jac}(C)(\mathbb{Q})).$ The number of equivalence classes is at most $16m^r$ by the theorems of Mazur and Mordell-Weil. There is therefore a class $K\subset C(\mathbb{Q})$ such that
$$N(B)\ll m^r \sharp \{P\in K: H(P)\leq B\}.$$
If we fix a point $R$ in $K$ then for any other point $P$ in $K$, there will be a further point $Q$ in $C(\mathbb{Q})$ such that $[P]=m[Q]-(m-1)[R]$ in the divisor class group of $C$. We define the curve $X=X_R$ by
$$X_R:=\{(P,Q)\in C\times C : [P]=m[Q]-(m-1)[R]\}$$
in $\mathbb{P}^3\times \mathbb{P}^3.$ Then $N(B)\ll m^r \sharp \mathcal{K}$, where
$$\mathcal{K}:= \{(P,Q)\in X(\mathbb{Q}): H(P)\leq B\}.$$
We have thus reduced the counting problem for $C$ to a counting problem for biprojective curves in $\mathbb{P}^3\times \mathbb{P}^3.$ Moreover, we can also reduce to the case where $C$ is defined by quadratic forms of small heights. We denote by $||F||$ the maximum modulus of the integral coefficients of $F$. The following result is an easy consequence of Lemma 5 in the paper of Broberg [2].

\begin{lem} Let $C$ be an integral quartic curve in $\mathbb{P}^3$ defined by two quadratic forms $q,r$ in $\mathbb{Q}[x_0,x_1,x_2,x_3]$, then either $N(B)\leq 8$ or the homogeneous ideal $I=\langle q,r \rangle $ in $\mathbb{Q}[x_0,x_1,x_2,x_3]$ can be generated by two quadratic forms $q',r'$ in $\mathbb{Z}[x_0,x_1,x_2,x_3]$ such that $\| q'\|\|r'\| \ll B^{160}.$ \end{lem}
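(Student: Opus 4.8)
The plan is to separate two cases according to whether $C$ carries few or many rational points of bounded height, the threshold being dictated by B\'ezout's theorem. Since $C$ is a complete intersection of two quadrics it has degree $4$, so any quadric surface $Q\subset\mathbb{P}^3$ that does not contain $C$ meets it in a zero-dimensional scheme of length at most $\deg(Q)\cdot\deg(C)=2\cdot 4=8$. Consequently, if $N(B)\geq 9$ then every quadratic form vanishing at $9$ of the rational points of $C$ with $H(P)\leq B$ must vanish on all of $C$. If instead $N(B)\leq 8$ we are already in the first alternative of the lemma, so from now on I would assume $N(B)\geq 9$ and fix nine such points $P_1,\dots,P_9\in C(\mathbb{Q})$ with coprime integral coordinates of modulus $\leq B$.

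Next I would identify the space of quadratic forms pinned down by these points with the pencil $\langle q,r\rangle$. Because $C$ is the complete intersection cut out by $q$ and $r$, the degree-$2$ part $I_2$ of its homogeneous ideal is exactly the two-dimensional span $\langle q,r\rangle$. By the B\'ezout observation above, a quadratic form vanishes at $P_1,\dots,P_9$ if and only if it vanishes on $C$, i.e.\ lies in $I_2$. Writing a general quadratic form as $F=\sum_{0\leq i\leq j\leq 3}c_{ij}x_ix_j$, the nine conditions $F(P_k)=0$ become a homogeneous linear system $M\mathbf{c}=0$, where $M$ is a $9\times 10$ integer matrix whose entries are the monomial values $x_i(P_k)x_j(P_k)$ and hence satisfy $|M_{k,(ij)}|\leq B^2$. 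The solution space of this system is precisely $I_2$, so it is two-dimensional and $M$ has rank $8$.

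It then remains to extract from this kernel two integral quadratic forms $q',r'$ spanning $\langle q,r\rangle$ with controlled height, and this is exactly the content of Lemma 5 of Broberg [2]. Applying Siegel's lemma (equivalently, the Bombieri--Vaaler estimate for the height of a rational subspace) to the rank-$8$ system $M\mathbf{c}=0$, one obtains a basis $q',r'$ of the solution lattice whose coefficients are bounded by the $8\times 8$ minors of $M$, each of which is $\ll (B^2)^{8}$; the forms $q',r'$ then generate $I=\langle q,r\rangle$ and satisfy a bound polynomial in $B$. The explicit exponent $160$ is the one furnished by Broberg's Lemma 5, which is stated for general projective varieties and is far from optimal in the present case $(n,d)=(3,4)$; a direct optimization of the minor estimate above would do considerably better, but the crude universal bound is amply sufficient for the application, so I would simply quote [2].

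The only genuinely delicate point is the height bookkeeping in this last step: one must pass from a $\mathbb{Q}$-basis of the kernel to an \emph{integral} pair of forms that generate the ideal $I$, rather than merely spanning the $\mathbb{Q}$-vector space $I_2$, while keeping the common content under control so that the height remains polynomial in $B$. For a complete intersection the ideal is generated in degree $2$, so spanning the pencil already suffices and no higher-degree syzygy analysis is needed; the remaining estimate is precisely the Siegel-type bound packaged in Broberg's Lemma 5, which is why the result is an easy consequence of it. The B\'ezout count, ensuring that nine points force containment of $C$, is the only additional input required.
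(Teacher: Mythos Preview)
Your proposal is correct and rests on the same external input as the paper, namely Broberg's Lemma~5 in~[2]; however, the two arguments are organised a bit differently and it is worth recording the contrast.

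The paper invokes Broberg's lemma as a black box: if $N(B)>8$ it produces generators $q_1,\dots,q_t$ of $I$ of degree at most~$2$ with $\prod_i\|q_i\|\ll B^{160}$, and then adds a short ad~hoc step to pass from $t$ generators to two---since $C$ lies in no hyperplane the $q_i$ are all genuine quadratic forms, and any two of them cut out a quartic curve in $\mathbb{P}^3$ containing $C$, hence equal to $C$. Your route instead bypasses this reduction by identifying from the outset, via B\'ezout, the kernel of the $9\times 10$ evaluation matrix with the two-dimensional pencil $I_2=\langle q,r\rangle$, and then applies the Siegel-type height estimate (which is what underlies Broberg's lemma) directly to that pencil. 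This is slightly more self-contained and, as you observe, would yield a far smaller exponent than~$160$ if one cared to optimise; the paper's version is quicker to state since it just quotes~[2], at the cost of the extra $t\to 2$ argument. Either way the substance is the same: B\'ezout gives the threshold~$8$, and a Siegel/Bombieri--Vaaler bound controls the height of an integral basis of the pencil.
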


\begin{proof} By [2], if $N(B)>8$ then $I$ can be generated by forms $q_1,...,q_t$ of degrees at most 2 such that $\prod_{i=1}^t ||q_i||\ll B^{160}.$ Since $C$ is an integral complete intersection of quadrics, it cannot be contained in a plane. So the $q_i$ are all irreducible quadratic forms. On the other hand, the intersection of any two elements $q',r'$, say, from $\{q_1,...,q_t\}$ defines a quartic curve in $\mathbb{P}^3$ which contains $C$. Hence $C$ is defined by $q'$ and $r'$. \end{proof}

Thus from now on, we may suppose that $C$ is a complete intersection defined by two quadratic forms $q,r$ in $\mathbb{Z}[x_0,x_1,x_2,x_3]$ with $\| q\|\|r\| \ll B^{160}$. We shall also need the following lemma.

\begin{lem} Let $C$ in $\mathbb{P}^3$ be a non-singular complete intersection in $\mathbb{P}^3$ defined by two quadratic forms $q,r$ in $\mathbb{Z}[x_0,x_1,x_2,x_3]$ with $\|q\|\|r\|\ll B^{160},$ and $R$ be a point in $C(\mathbb{Q}).$ Then there exists an absolute constant $A$ with the following property. Suppose that $(P,Q)$ is a point in $X_R(\mathbb{Q})$ and that $B\geq 3.$ Then if $H(P)$ and $H(R)$ are at most $B$ we have $H(Q)\leq B^A.$ \end{lem}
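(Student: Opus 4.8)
The plan is to reinterpret the defining relation of $X_R$ as a division relation on the Jacobian and then exploit the quadratic scaling of the N\'eron--Tate height. Fixing the base point $R$, the morphism $Q\mapsto \psi(Q,R)=[Q]-[R]$ identifies $C$ with $\mathrm{Jac}(C)$, and the condition $[P]=m[Q]-(m-1)[R]$ is equivalent to $\psi(P,R)=m\,\psi(Q,R)$ in $\mathrm{Jac}(C)(\mathbb{Q})$; in other words $\psi(Q,R)$ is an $m$-division point of $\psi(P,R)$. Thus the task is to bound the height of an $m$-division point of a point of height at most $B$, uniformly in $m$.

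First I would bring in the N\'eron--Tate canonical height $\hat{h}$ on $J:=\mathrm{Jac}(C)$ attached to a symmetric ample divisor representing the principal polarization. Since $\hat{h}$ is a positive semidefinite quadratic form satisfying $\hat{h}(mx)=m^2\hat{h}(x)$, the relation above gives
\[
\hat{h}\bigl(\psi(Q,R)\bigr)=\frac{1}{m^2}\,\hat{h}\bigl(\psi(P,R)\bigr)\le \hat{h}\bigl(\psi(P,R)\bigr).
\]
This is the crucial step: the factor $m^{-2}$ exactly compensates for the degree $m^2$ of the multiplication-by-$m$ map, so the resulting bound does not deteriorate as $m$ grows. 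By contrast, an explicit elimination using division polynomials would produce a bound depending on $m$, which would be useless here since $m$ is allowed to grow with $B$.

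Next I would compare the canonical height on $J$ with the naive logarithmic height $h:=\log H$ on $C\subset\mathbb{P}^3$. Expressing the degree-$4$ hyperplane section of $C$ through the principal polarization pulled back along $\psi(\cdot,R)$, the general theory of heights on abelian varieties yields comparison inequalities
\[
\hat{h}\bigl(\psi(P,R)\bigr)\ll h(P)+h(R)+c(C,R),\qquad h(Q)\ll \hat{h}\bigl(\psi(Q,R)\bigr)+h(R)+c(C,R),
\]
where $c(C,R)$ measures the difference between the naive and canonical heights. Chaining these with the displayed identity gives $h(Q)\ll h(P)+h(R)+c(C,R)\ll \log B+c(C,R)$, so it remains only to show that $c(C,R)\ll \log B$.

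The main obstacle is precisely this last point: one needs an \emph{effective and uniform} bound on the height-comparison constant, polynomial in the data defining $C$ and $R$. I would obtain it from explicit estimates for the difference between the naive and canonical heights---for instance through the decomposition into local heights, or after passing to an associated Weierstrass model---which bound $c(C,R)$ in terms of $\log\bigl(\|q\|\,\|r\|\bigr)$, the logarithm of the discriminant of the genus-$1$ model, and $h(R)$. Since $\|q\|\,\|r\|\ll B^{160}$ by hypothesis and $H(R)\le B$, each of these quantities is $\ll \log B$, whence $c(C,R)\ll \log B$. Combining everything gives $h(Q)\ll \log B$, that is $H(Q)\le B^A$ for an absolute constant $A$, as required.
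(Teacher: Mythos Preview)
Your proposal is correct and follows essentially the same approach as the paper: translate the divisor relation to $\psi(P,R)=m\,\psi(Q,R)$, use the quadratic scaling $\hat h(\psi(P,R))=m^2\hat h(\psi(Q,R))$, and then bound the difference between naive and canonical heights uniformly by $O(\log B)$ via the hypothesis $\|q\|\|r\|\ll B^{160}$. The only difference is that where you leave the height-comparison step somewhat open (local heights or ``an associated Weierstrass model''), the paper fixes an explicit Weierstrass model $y^2=x^3+\alpha x+\beta$ for $\mathrm{Jac}(C)$ with $h([1,\alpha,\beta])\ll 1+\log(\|q\|\|r\|)$, citing \cite[\S3.3]{Kol Fo00} (An et al.), and then invokes the standard bound $|\hat h(W)-h_x(W)|\ll 1+h([1,\alpha,\beta])$.
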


The proof is similar to the proof of Lemma 2.1 of [7].

\begin{proof} Let us first introduce the logarithmic height $h(P):=\log H(P)$ of a point $P$ in projective spaces $\mathbb{P}^2$ and $\mathbb{P}^3$. Note that for a point $P=[x_0,x_1,x_2]$ in $\mathbb{P}^2$ with coprime integer values of $x_0,x_1,x_2$, we define the naive height of $P$ in the same way $H(P):=\max \{|x_0|,|x_1|,|x_2|\}.$ As in [1, Section 3.3], we can choose a model for Jac$(C)$ in Weierstrass normal form $y^2=x^3+\alpha x +\beta$ such that
$$h([1,\alpha, \beta])\ll 1+\log \left( \|q\|\|r\|\right)$$
and so that
$$h_x(\psi(P,R))\ll 1+\log H(P)+\log H(R)+\log \left( \|q\|\|r\|\right)$$
and
$$\log H(P)\ll 1+h_x(\psi(P,R))+\log H(R)+\log \left( \|q\|\|r\|\right),$$
where $h_x$ is the logarithmic height of the $x$-coordinate. We also use the fact that on Jac$(C)$ the canonical height $\hat{h}$ satisfies
$$|\hat{h}(W)-h_x(W)|\ll 1+h([1,\alpha,\beta])\ll 1+\log \left( \|q\|\|r\|\right).$$
Since $\psi(P,R)=m\psi(Q,R)$ we deduce that $\hat{h}(\psi(P,R))=m^2 \hat{h}(\psi(Q,R))$. Then
$$\log H(Q)\ll 1+h_x(\psi(Q,R))+\log H(R)+\log \left( \|q\|\|r\|\right)$$
$$\ll 1+\hat{h}(\psi(Q,R))+\log H(R)+\log \left( \|q\|\|r\|\right)$$
$$=1+m^{-2}\hat{h}(\psi(P,R))+\log H(R)+\log \left( \|q\|\|r\|\right)$$
$$\ll 1+m^{-2}h_x(\psi(P,R))+\log H(R)+\log \left( \|q\|\|r\|\right)$$
$$\ll 1+ \log H(P)+\log H(R)+\log \left( \|q\|\|r\|\right)\ll \log B,$$
since $\|q\|\|r\|\leq B^{160}.$  \end{proof}

We now apply the global determinant method in [13] to $X$ and consider congruences between integral points on $X$ modulo all primes of good reduction for $C$ and $X$. It is a refinement of the $p$-adic determinant method used in [6] and [7].

We will label the points in $\mathcal{K}$ as $(P_i,Q_i)$ for $1\leq i\leq N,$ say, and fix integers $a,b\geq 1$. Let $I_1$ be the vector space of all bihomogeneous forms in $(x_0,x_1,x_2,x_3;y_0,y_1,y_2,y_3)$ of bidegree $(a,b)$ with coefficients in $\mathbb{Q}$ and $I_2$ be the subspace of such forms which vanish on $X$. Since the monomials
$$x_0^{e_0}x_1^{e_1}x_2^{e_2}x_3^{e_3}y_0^{f_0}y_1^{f_1}y_2^{f_2}y_3^{f_3}$$
with $$e_0+e_1+e_2+e_3=a \text{  and  }f_0+f_1+f_2+f_3=b$$
form a basis for $I_1$, there is a subset of monomials $\{F_1,...,F_s\}$ whose corresponding cosets form a basis for $I_1/I_2$. We will prove the following result later in Section 5.
\begin{lem} If $a,b$ and $m$ are positive integers satisfying the inequality $\frac{1}{a}+\frac{m^2}{b}<4$, then $s=4(m^2a+b).$ \end{lem}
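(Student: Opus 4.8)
The plan is to identify $X$ with $C$ itself, so that $s$ becomes the dimension of the image of the restriction map into the global sections of an explicit line bundle, and then to compute that dimension by Riemann--Roch after proving the restriction map is surjective via a vanishing theorem for a kernel (syzygy) bundle.

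\textbf{Step 1: the geometry of $X$.} Using the base point $R$ to identify $C$ with $\mathrm{Jac}(C)$ through Abel--Jacobi, $Q\mapsto [Q]-[R]$, the defining relation $[P]=m[Q]-(m-1)[R]$ becomes $P=[m]Q$. Hence $X$ is the graph of the multiplication morphism $\phi:=[m]:C\to C$, so the second projection $\pi_2:X\to C$ is an isomorphism under which the first projection becomes $\phi$. Restricting $\mathcal{O}(a,b)$ yields $\mathcal{O}_X(a,b)\cong L_{a,b}:=\phi^*\mathcal{O}_C(a)\otimes\mathcal{O}_C(b)$. Since $\deg\phi=m^2$ and the hyperplane bundle $\mathcal{O}_C(1)$ has degree $4$, we get $\deg L_{a,b}=4(m^2a+b)>0$, and Riemann--Roch on the genus-$1$ curve $C$ gives $\dim H^0(C,L_{a,b})=4(m^2a+b)$.

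\textbf{Step 2: reduction to a multiplication map.} As $I_2$ is the kernel of the restriction map $I_1\to H^0(X,\mathcal{O}_X(a,b))$, the quotient $I_1/I_2$ is exactly its image, so $s\le 4(m^2a+b)$ automatically and only surjectivity remains. A monomial $F(x)G(y)$ restricts on $X$ to $\phi^*(F|_C)\cdot(G|_C)$; since the complete intersection $C$ is projectively normal, $F|_C$ sweeps out all of $H^0(\mathcal{O}_C(a))$ and $G|_C$ all of $H^0(\mathcal{O}_C(b))$. Thus the image of the restriction map equals the image of
\[
\mu:A\otimes H^0(\mathcal{O}_C(b))\longrightarrow H^0(L_{a,b}),\qquad A:=\phi^*H^0(\mathcal{O}_C(a)),
\]
where $A$ is a base-point-free subspace of $H^0(\phi^*\mathcal{O}_C(a))$ of dimension $4a$, and it suffices to show $\mu$ is surjective.

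\textbf{Step 3: kernel bundle and the numerical threshold.} Let $M_A$ be the kernel of the evaluation $A\otimes\mathcal{O}_C\to\phi^*\mathcal{O}_C(a)$, a vector bundle of rank $4a-1$ with $\det M_A=\phi^*\mathcal{O}_C(-a)$, hence $\deg M_A=-4m^2a$. Twisting the defining sequence by $\mathcal{O}_C(b)$ and passing to cohomology shows $\mu$ is surjective as soon as $H^1(C,M_A\otimes\mathcal{O}_C(b))=0$. Writing $M_A=\phi^*M_0$ with $M_0$ the syzygy bundle of $|\mathcal{O}_C(a)|$, one checks $M_0$ is semistable and that semistability survives pullback by the isogeny $[m]$, so $M_A\otimes\mathcal{O}_C(b)$ is semistable; on a genus-$1$ curve a semistable bundle of positive slope has no higher cohomology, its dual being semistable of negative slope and hence sectionless. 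The slope is
\[
\mu\bigl(M_A\otimes\mathcal{O}_C(b)\bigr)=\frac{-4m^2a}{4a-1}+4b,
\]
which is positive exactly when $4ab>m^2a+b$, i.e. when $\tfrac1a+\tfrac{m^2}{b}<4$. Under this hypothesis $\mu$ is surjective, giving $s=4(m^2a+b)$.

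\textbf{Main obstacle.} The heart of the argument is the semistability of the syzygy bundle $M_0$ on the elliptic curve and its invariance under pullback by $[m]$; I would obtain the former from Atiyah's classification of bundles on elliptic curves (or a direct slope estimate) and the latter from the fact that pullback by a separable isogeny preserves semistability. Everything else—projective normality of $C$ and the translation of positivity of the slope into the stated inequality—is routine, and it is satisfying that the threshold $\tfrac1a+\tfrac{m^2}{b}<4$ emerges precisely as the positivity of this slope.
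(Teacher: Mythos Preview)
Your proof is correct and takes a genuinely different route from the paper's.

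Both arguments identify $X_R$ with $C$ via the second projection and compute $\dim H^0(X_R,\mathcal{O}_{X_R}(a,b))=4(m^2a+b)$ by Riemann--Roch; the divergence is entirely in the surjectivity step. The paper never descends to a multiplication map on $C$: instead it factors the restriction through the chain $\mathbb{P}^3\times\mathbb{P}^3\supset C\times\mathbb{P}^3\supset C\times C\supset X_R$, handling the first two inclusions by the Koszul resolution of a complete intersection together with Kodaira vanishing, and the last by Kawamata--Viehweg vanishing on the abelian surface $C\times C$. The numerical threshold appears there as the positivity of the self-intersection $((a,b)-X_R)^2=8ab\bigl(4-\tfrac{1}{a}-\tfrac{m^2}{b}\bigr)$ needed to make $(a,b)-X_R$ big and nef.

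Your approach trades the surface vanishing theorem for vector-bundle input on the curve: projective normality reduces everything to surjectivity of a multiplication map, and then the syzygy bundle $M_A=\phi^*M_0$ controls the cokernel. The semistability of $M_0$ (Butler's theorem, or Atiyah on elliptic curves, valid since $\deg\mathcal{O}_C(a)=4a\ge 2$) and its preservation under the separable isogeny $[m]$ are the non-trivial ingredients; once granted, the threshold is literally the positivity of the slope $-\tfrac{4m^2a}{4a-1}+4b$, which is a pleasing reinterpretation. The paper's route is perhaps more self-contained, needing only standard vanishing theorems and intersection numbers, while yours is more intrinsic to the curve and explains the inequality as a stability condition; either way the same hypothesis $\tfrac{1}{a}+\tfrac{m^2}{b}<4$ is exactly what is required.
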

Thus we shall always assume that $a\geq 1$ and $b\geq m^2$ to make sure that $s=4(m^2 a +b).$ Consider the $N\times s$ matrix
$$M=\left(
  \begin{array}{cccc}
    F_1(P_1,Q_1) & F_2(P_1,Q_1) & \ldots & F_s(P_1,Q_1) \\
    F_1(P_2,Q_2) & F_2(P_2,Q_2) & \ldots & F_s(P_2,Q_2) \\
    \vdots & \vdots & \ldots & \vdots \\
    F_1(P_N,Q_N) & F_2(P_N,Q_N) & \ldots & F_s(P_N,Q_N) \\
  \end{array}
\right).$$
If we can choose $a$ and $b$ such that rank$(M)<s$, then there is a non-zero column vector $\underline{c}$ such that $M\underline{c}=\underline{0}$. This will produce a bihomogeneous form $G$, say, of bidegree $(a,b)$ such that $G(P_i,Q_i)=0$ for all $1\leq i\leq N.$ The points in $\mathcal{K}$ will then lie on the variety $Y\subset \mathbb{P}^3\times \mathbb{P}^3$ given by $G=0,$ while the irreducible curve $X$ will not lie on $Y$. Thus the intersection number $X.Y$ provides an upper bound for $N$.

Now let $H$ and $H'$ be the varieties on $\mathbb{P}^3 \times \mathbb{P}^3$ given by $x_0=0$ and $y_0=0$ respectively. Then $Y$ is linearly equivalent to $aH+bH'$ and $X.Y=aX.H+bX.H'$. Further, $X.H'=4$ as a hyperplane in $\mathbb{P}^3$ intersects $C$ in 4 points and $X.H=4m^2$ since for a fixed point $P$ on $C$ there are $m^2$ pairs $(P,Q)$ on $X$. Hence
\begin{equation}
N\leq \sharp (X\cap Y)\leq X.Y=4(m^2a+b).
\end{equation}
In order to show that rank$(M)<s$, we may clearly suppose that $N\geq s$. We will show that each $s\times s$ minor det$(\Delta)$ of $M$ vanishes. Without loss of generality, let $\Delta$ be the $s\times s$ matrix formed by the first $s$ rows of $M$.
$$\Delta=\left(
  \begin{array}{cccc}
    F_1(P_1,Q_1) & F_2(P_1,Q_1) & \ldots & F_s(P_1,Q_1) \\
    F_1(P_2,Q_2) & F_2(P_2,Q_2) & \ldots & F_s(P_2,Q_2) \\
    \vdots & \vdots & \ldots & \vdots \\
    F_1(P_s,Q_s) & F_2(P_s,Q_s) & \ldots & F_s(P_s,Q_s) \\
  \end{array}
\right).$$
The main idea of the determinant method is to give an upper bound for det$(\Delta)$ and to show that it has an integral factor which is larger than this bound. It is not difficult to see that every entry in $\Delta$ has modulus at most $B^a B^{Ab}$, where $A$ is the absolute constant in Lemma 2.2. Since $\Delta$ is a $s\times s$ matrix, we get that
\begin{equation}\label{2}
  |\text{det}(\Delta)|\leq s^s B^{s(a+Ab)}.
\end{equation}
Now we find a factor of det$(\Delta)$ of the form $p^{N_p},$ where $p$ is a prime of good reduction for $C$. In order to do that, we divide $\Delta$ into blocks such that elements in each block have the same reduction modulo $p$.

Let $p$ be a prime number and $Q^*$ be a point on $C(\mathbb{F}_p),$ we then define the set
$$S(Q^*,p,\Delta )=\{(P_i,Q_i): 1\leq i \leq s, \text{ }\overline{Q_i}=Q^*\},$$
where $\overline{Q_i}$ denotes the reduction from $C(\mathbb{Q})$ to $C(\mathbb{F}_p)$. Let $E=\sharp S(Q^*,p,\Delta )$. We consider any $E\times E$ submatrix $\Delta^*=(F_j(P_i,Q_i))_{i,j}$ of $\Delta$ with all $(P_i,Q_i)$ in $S(Q^*,p,\Delta )$ and get the following result by means of Lemma 2.5 of [11].

\begin{lem} If $p$ is a prime of good reduction for $C$, then there exists a non-negative integer $\nu \geq \frac{E^2}{2}+O(E)$ such that $p^{\nu}$ divides $\det(\Delta^*).$ \end{lem}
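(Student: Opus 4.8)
The plan is to apply Salberger's divisibility lemma (Lemma 2.5 of [11]) to the submatrix $\Delta^*$, the essential points being to identify the relevant smooth local model, to exhibit a common reduction point, and to bound a sum of vanishing orders. First I would record that the projection $X\to C$, $(P,Q)\mapsto Q$, is an isomorphism: for a fixed $Q$ the class $m[Q]-(m-1)[R]$ has degree $1$, and on the genus-$1$ curve $C$ every degree-$1$ divisor class has a unique effective representative, so $P$ is determined by $Q$. Hence $X$ is itself a smooth curve of genus $1$, and good reduction of $C$ at $p$ yields good reduction of $X$; in particular the point $\widetilde{Q}^*$ of $X(\mathbb{F}_p)$ lying over $Q^*$ is a smooth point, and every $(P_i,Q_i)\in S(Q^*,p,\Delta)$ reduces to $\widetilde{Q}^*$.

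Working over $\mathbb{Z}_{(p)}$, I would fix a smooth model of $X$ at $\widetilde{Q}^*$ together with a local parameter $t$ generating the maximal ideal, and view the $E$ monomials $F_j$ indexing the columns of $\Delta^*$ as sections of the line bundle $\mathcal{O}(a,b)|_X$ in a fixed local trivialization at $\widetilde{Q}^*$. The key structural fact is that these monomials, being part of a basis of $I_1/I_2$, restrict to linearly independent sections on $X$, so the vanishing-order map on their span realizes exactly $E$ distinct non-negative integers $\rho_1<\rho_2<\cdots<\rho_E$. Choosing a basis adapted to the resulting order filtration changes $\Delta^*$ by a matrix that is $\mathbb{Z}_{(p)}$-integral and invertible modulo $p$, hence of $p$-adic valuation zero, so it suffices to bound the valuation of the transformed determinant.

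In the adapted basis the $k$-th section equals $t^{\rho_k}$ times a local unit. Since each $(P_i,Q_i)$ reduces to $\widetilde{Q}^*$, the uniformizer takes a value divisible by $p$ there, so every entry in the $k$-th column is divisible by $p^{\rho_k}$. Pulling these factors out of the columns gives
$$
v_p(\det \Delta^*)\ \geq\ \sum_{k=1}^{E}\rho_k\ \geq\ 0+1+\cdots+(E-1)\ =\ \frac{E(E-1)}{2}\ =\ \frac{E^2}{2}+O(E),
$$
which is the asserted bound with $\nu=\sum_{k}\rho_k$.

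The step I expect to be most delicate is the passage to the adapted basis: one must ensure the change-of-basis matrix is genuinely a $p$-adic unit and that the local trivialization is compatible with the primitive integral normalization of the coordinates, so that vanishing to order $\rho_k$ translates into divisibility by exactly $p^{\rho_k}$ of the integral values $F_j(P_i,Q_i)$. This bookkeeping, together with the verification that good reduction furnishes an integral uniformizer at $\widetilde{Q}^*$, is precisely what Lemma 2.5 of [11] supplies; the remaining input specific to our situation is the elementary count that $E$ distinct non-negative orders sum to at least $\binom{E}{2}$.
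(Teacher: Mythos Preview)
Your proposal is correct and captures the local mechanism behind the divisibility bound, but it takes a genuinely different route from the paper. The paper does not work intrinsically on $X_R$ with a local uniformizer; instead it reduces to the known projective case by composing the Veronese embeddings $\mathbb{P}^3\times\mathbb{P}^3\hookrightarrow\mathbb{P}^g\times\mathbb{P}^h$ (with $g=\binom{3+a}{3}-1$, $h=\binom{3+b}{3}-1$) with the Segre embedding into $\mathbb{P}^l$, $l=(g+1)(h+1)-1$. Under this map the bihomogeneous monomials $F_j$ become linear coordinate functions and the $(P_i,Q_i)$ become primitive integral points on a curve in $\mathbb{P}^l$ with a common reduction mod $p$, so Lemma~2.5 of~[11] applies as a black box. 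What the paper's approach buys is brevity and a clean appeal to the existing statement; what your approach buys is transparency about why the bound is $\binom{E}{2}$, at the cost of having to justify the $p$-integrality of the adapted change of basis in the biprojective setting---a step the Veronese--Segre reduction sidesteps entirely, since after the embedding the $F_j$ are literally coordinate hyperplanes and the integral normalization is automatic.
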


\begin{proof} The result in [11] can also be applied to our biprojective curve as follows. The bihomogeneous monomials of bidegree $(a,b)$ will first give an embedding of the curve $X$ in $\mathbb{P}^g \times \mathbb{P}^h$ via the Veronese map, where $g=\binom{3+a}{3}-1$ and $h=\binom{3+b}{3}-1$, then in a subspace of the big projective space $P^l$ via the Segre map, where $l=(g+1)(h+1)-1=\binom{3+a}{3}\binom{3+b}{3}-1$. This proves the lemma. \end{proof}

From this lemma we obtain a factor of det$(\Delta)$ of the form $p^{N_p}$ by means of Laplace expansion. Moreover, we can use the same argument for all primes of good reduction for $C$.

\begin{lem} Let $p$ be a prime of good reduction for $C$, then there exists a non-negative integer $N_p\geq \frac{s^2}{2n_p}+ O(s)$ such that $p^{N_p} | \det(\Delta),$ where $n_p$ is the number of $\mathbb{F}_p$-points on $C(\mathbb{F}_p)$. \end{lem}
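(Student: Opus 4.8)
The plan is to glue together the local estimates of Lemma 2.4 over all residue points on $C(\mathbb{F}_p)$ by means of a generalised Laplace expansion, and then to optimise the result by convexity. First I would partition the $s$ rows of $\Delta$ according to the reduction of their second coordinate: for each $Q^*\in C(\mathbb{F}_p)$ I collect the rows indexed by the points of $S(Q^*,p,\Delta)$, and write $E_{Q^*}=\sharp S(Q^*,p,\Delta)$. These row-blocks are pairwise disjoint and cover all $s$ rows, so $\sum_{Q^*\in C(\mathbb{F}_p)}E_{Q^*}=s$, a sum with at most $n_p$ nonzero terms.

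Next I would expand $\det(\Delta)$ by Laplace's theorem along this fixed partition of the rows. This writes $\det(\Delta)$ as a signed sum of terms, each of the form $\prod_{Q^*}\det(\Delta^*_{Q^*})$, where $\Delta^*_{Q^*}$ is an $E_{Q^*}\times E_{Q^*}$ submatrix whose rows all lie in $S(Q^*,p,\Delta)$ and only the choice of columns varies from term to term. By construction each block $\Delta^*_{Q^*}$ satisfies the hypothesis of Lemma 2.4, so $p^{\nu_{Q^*}}$ divides $\det(\Delta^*_{Q^*})$ for some integer $\nu_{Q^*}\geq \tfrac12 E_{Q^*}^2+O(E_{Q^*})$. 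Hence every term of the expansion, and therefore $\det(\Delta)$ itself, is divisible by $p^{N_p}$ with
$$N_p\geq \sum_{Q^*\in C(\mathbb{F}_p)}\left(\tfrac12 E_{Q^*}^2+O(E_{Q^*})\right)=\tfrac12\sum_{Q^*}E_{Q^*}^2+O(s),$$
the error term collapsing to $O(s)$ because $\sum_{Q^*}E_{Q^*}=s$.

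Finally I would estimate $\sum_{Q^*}E_{Q^*}^2$ from below. As the index $Q^*$ ranges over the $n_p$ points of $C(\mathbb{F}_p)$ and $\sum_{Q^*}E_{Q^*}=s$, the Cauchy--Schwarz inequality gives $s^2=\big(\sum_{Q^*}E_{Q^*}\big)^2\leq n_p\sum_{Q^*}E_{Q^*}^2$, whence $\sum_{Q^*}E_{Q^*}^2\geq s^2/n_p$. Substituting this into the previous display yields $N_p\geq \frac{s^2}{2n_p}+O(s)$, which is the asserted bound.

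I expect the only genuinely delicate point to be the bookkeeping of the implied constants. One must check that the $O(E_{Q^*})$ term furnished by Lemma 2.4 carries an implied constant that is uniform over all blocks and all residue points $Q^*$; otherwise the summation over the (up to $n_p$) blocks could inflate the error beyond $O(s)$. Granting this uniformity, the remaining ingredients---the purely formal Laplace expansion and the one-line convexity bound---are entirely routine.
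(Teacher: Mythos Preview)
Your argument is correct and follows the same route as the paper: partition the rows of $\Delta$ by the reduction of $Q_i$ modulo $p$, apply Lemma 2.4 to each block, combine via Laplace expansion, and pass from $\tfrac12\sum E_{Q^*}^2$ to $\tfrac{s^2}{2n_p}$ by Cauchy--Schwarz. The paper's proof is terser---it writes the last inequality without naming Cauchy--Schwarz and does not comment on the uniformity of the $O$-constants---but the substance is identical.
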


\begin{proof}  Let $P$ be a point on $C(\mathbb{F}_p)$ and $s_P$ be the number of elements in $S(P,p,\Delta ),$ then there exists from Lemma 2.4 an integer $N_P\geq \frac{s_P^2}{2}+O(s_P)$ such that $p^{N_P} | \text{det}(\Delta^*)$ for each $s_P\times s_P$ submatrix $\Delta^*=(F_j(P_i,Q_i))_{i,j}$ of $\Delta$ with all $(P_i,Q_i)$ in $S(P,p,\Delta )$.

If we apply this to all points on $C(\mathbb{F}_p)$ and use Laplace expansion, then we get that $p^{N_p} | \text{det}(\Delta)$ for
$$N_p=\sum_P N_P=\frac{1}{2}\sum_P {s_P}^2+O(s)\geq \frac{s^2}{2n_p}+O(s)$$
in case $C$ has good reduction at $p$.\end{proof}

We now give a bound for the product of primes of bad reduction for $C$. Since we can assume that $\|q\|\|r\|\ll B^{160}$, the discriminant $D_C$ of $C$ will satisfy log$|D_C|\ll$ log $B$. It follows that log $\Pi_C\ll$ log $B$, where $\Pi_C$ is the product of all primes of bad reduction for $C$. We have therefore the following bound.

\begin{lem} Suppose that $\|q\|\|r\|\ll B^{160}.$ The product $\Pi_C$ of all primes of bad reduction for $C$ satisfies $\log \Pi_C=O(\log B).$ \end{lem}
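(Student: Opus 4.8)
The plan is to exhibit a single nonzero integer $D_C$ — the discriminant of the pencil spanned by $q$ and $r$ — which is divisible by every prime of bad reduction and whose size is controlled polynomially by $\|q\|\|r\|$. First I would pass to Gram matrices: write $q(x)=x^{t}M_q x$ and $r(x)=x^{t}M_r x$ with symmetric $4\times 4$ integral matrices $M_q,M_r$ (replacing $q,r$ by $2q,2r$ if one wants to avoid half-integers at $p=2$). I would then form the binary quartic
$$F(\lambda,\mu)=\det(\lambda M_q+\mu M_r),$$
whose five coefficients are integer polynomials of bounded degree in the entries of $M_q,M_r$; expanding the $4\times 4$ determinant shows each coefficient is $\ll \max(\|q\|,\|r\|)^{4}$ for an absolute implied constant. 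Setting $D_C=\operatorname{disc}(F)$ and using that the discriminant of a binary quartic is a fixed homogeneous polynomial of degree six in the coefficients of $F$, one obtains $|D_C|\ll\max(\|q\|,\|r\|)^{24}\ll(\|q\|\|r\|)^{24}\ll B^{3840}$, whence $\log|D_C|\ll\log B$. Note $D_C\neq 0$ precisely because $C$ is non-singular.

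The geometric input is the classical fact that the base locus of a pencil of quadrics in $\mathbb{P}^3$ is a smooth curve of genus $1$ if and only if the four singular members of the pencil are distinct rank-$3$ cones, i.e. if and only if $F$ has four distinct roots in $\mathbb{P}^1$, i.e. if and only if $\operatorname{disc}(F)\neq 0$. Applying this criterion over the residue field $\mathbb{F}_p$ shows that for every odd prime $p$ the reduction $C\bmod p$ is again a smooth complete intersection exactly when $\overline{F}=F\bmod p$ has four distinct roots, that is exactly when $p\nmid D_C$. Consequently every prime of bad reduction divides $2D_C$, so $\Pi_C\mid 2D_C$ and
$$\log\Pi_C\le\log 2+\log|D_C|\ll\log B,$$
as claimed.

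The step needing the most care is the reduction-theoretic equivalence in the second paragraph: one must rule out the degenerate possibilities that, modulo $p$, the two quadrics share a common component, that the pencil collapses (so $\overline F\equiv 0$), or that the intersection fails to be a proper complete intersection of the expected dimension. I would dispose of these by observing that each such degeneracy forces $\overline F$ either to vanish identically or to acquire a repeated root, and in both cases $D_C$, being homogeneous in the coefficients of $F$ with no constant term, reduces to $0$ modulo $p$, so that $p\mid D_C$. The prime $p=2$, where the symmetric-matrix description carries the extra factor $2$, is absorbed harmlessly into the factor $2$ above. With these points settled the stated bound follows at once.
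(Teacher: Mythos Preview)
Your proposal is correct and follows essentially the same approach as the paper. The paper's argument is in fact only the two sentences preceding the lemma: since $\|q\|\|r\|\ll B^{160}$, the discriminant $D_C$ of $C$ satisfies $\log|D_C|\ll\log B$, and hence $\log\Pi_C\ll\log B$; your write-up simply makes explicit what $D_C$ is (the discriminant of the characteristic binary quartic of the pencil) and why every prime of bad reduction divides it, which the paper leaves implicit.
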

We need one more lemma from [13] (see Lemma 1.10).
\begin{lem} Let $\Pi>1$ be an integer and $p$ run over all prime factors of $\Pi$. Then $$ \sum_{p|\Pi}\frac{{\log}\text{ }p}{p}\leq {\log \log }\text{ }\Pi+2.$$ \end{lem}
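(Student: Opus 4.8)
The plan is to reduce the sum to the extreme case in which the prime factors of $\Pi$ are the smallest possible primes, and then estimate that case by classical prime-counting bounds. Throughout write $g(p)=\frac{\log p}{p}$, let $p_1<p_2<\cdots$ denote the primes in increasing order, and let $N_k=p_1p_2\cdots p_k$ be the $k$-th primorial.

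First I would establish the reduction. If $S$ is the set of prime factors of $\Pi$ and $j=\sharp S$, then since $\Pi$ is divisible by every prime in $S$ we have $\prod_{p\in S}p\le\Pi$; as the product of any $j$ distinct primes is at least $N_j$, this forces $N_j\le\Pi$. Choosing $k$ maximal with $N_k\le\Pi$ then gives $j\le k$. Next, the values $g(p)$, sorted in decreasing order over all primes, are attained at the primes $3,2,5,7,11,\dots$: indeed $g$ is strictly decreasing for real arguments $\ge e$, so $g(p_2)>g(p_3)>\cdots$, while a direct check gives $g(3)>g(2)>g(5)$. Hence the $j$ largest values of $g$ among all primes are exactly $g(p_1),\dots,g(p_j)$, and therefore $\sum_{p\in S}g(p)\le\sum_{i=1}^{j}g(p_i)\le\sum_{i=1}^{k}g(p_i)$. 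Since $\Pi\ge N_k$ also gives $\log\log\Pi\ge\log\log N_k$, it suffices to prove the inequality for $\Pi=N_k$, that is, to show $\sum_{i=1}^{k}g(p_i)\le\log\log N_k+2$.

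For the primorial case I would invoke the two standard estimates. Mertens' first theorem in explicit form gives $\sum_{i=1}^{k}\frac{\log p_i}{p_i}\le\log p_k+c_1$ with an absolute constant $c_1$, while Chebyshev's lower bound gives $\log N_k=\theta(p_k)\ge c_2\,p_k$, and hence $\log\log N_k\ge\log p_k+\log c_2$. Combining the two yields $\sum_{i=1}^{k}g(p_i)\le\log p_k+c_1\le\log\log N_k+(c_1-\log c_2)$, and with the (generous) true values of the Mertens and Chebyshev constants the error $c_1-\log c_2$ is comfortably below $2$ for all large $k$.

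The main obstacle is purely the bookkeeping of the explicit constant $2$: the asymptotic argument above loses control for small $k$, where $\theta(p_k)$ is not yet close to $p_k$ and $\log\log N_k$ can even be small or negative. I would therefore fix a threshold $k_0$ beyond which the explicit Rosser--Schoenfeld forms of Mertens' and Chebyshev's estimates make $c_1-\log c_2\le 2$ rigorous, and dispose of the finitely many remaining cases $1\le k<k_0$ (equivalently, after the reduction, the finitely many integers $\Pi<N_{k_0}$) by direct numerical verification. No step is conceptually difficult once the reduction in the second paragraph is in place; the only real care needed is to choose the constants so that the two estimates fit together to give exactly the stated bound.
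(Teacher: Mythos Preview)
The paper does not actually prove this lemma: it is quoted verbatim from Salberger's preprint [13, Lemma~1.10] and used as a black box. There is therefore no paper argument to compare yours with; you have supplied a proof where the paper only gives a citation.

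Your argument is sound and follows the standard route: reduce to the primorial case, then combine the explicit form of Mertens' first theorem with a Chebyshev lower bound for $\theta(p_k)=\log N_k$, mopping up small $k$ by direct computation. One small wrinkle: your sentence ``the $j$ largest values of $g$ among all primes are exactly $g(p_1),\dots,g(p_j)$'' is false for $j=1$, since $g(3)>g(2)$; consequently the inequality $\sum_{p\in S}g(p)\le\sum_{i=1}^{j}g(p_i)$ can fail when $S=\{3\}$. This only bites when $k=1$ as well, i.e.\ for $\Pi\in\{2,3,4,5\}$, and of those only $\Pi=3$ actually violates the displayed chain; since that case is already absorbed into your finite numerical check, the proof goes through. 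If you want the reduction to be literally correct as written, simply note separately that for $j=1$ one has $g(q)\le g(3)<g(2)+g(3)\le\sum_{i\le k}g(p_i)$ whenever $k\ge 2$, and treat $\Pi\le 5$ by hand.
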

We now use the previous lemmas to prove that det$(\Delta)$ vanishes if $s$ is large enough. Let $\Pi_C$ be the product of all primes $p$ of bad reduction for $C$, then

\begin{equation}\label{3}
\displaystyle \sum_{p|\Pi_C}\frac{\text{log }p}{p}\leq \text{log }\text{log }B+O(1)
\end{equation}
by Lemma 2.6 and Lemma 2.7. We apply Lemma 2.5 to the primes $p\leq s$ of good reduction for $C$ and write $\displaystyle {\sum_{p\leq s}}^*$ for a sum over these primes. We then obtain a positive factor $T$ of det$(\Delta)$ which is relatively prime to $\Pi_C$ such that
$$\displaystyle\text{log }T\geq \frac{s^2}{2}{\sum_{p\leq s}}^* \frac{\text{log }p}{n_p}+O(s){\sum_{p\leq s}}^* \text{log }p.$$
The last term is $O(s^2)$ since $\sum_{p\leq s}\text{log }p=O(s)$ (see [14], p. 31). Also,
$$\displaystyle \frac{\text{log }p}{n_p}\geq \frac{\text{log }p}{p}-\frac{(n_p-p)\text{log }p}{p^2}.$$
Moreover, it is a well-known result of Hasse that $n_p=p+O(\sqrt{p})$ for a prime $p$ of good reduction for $C$. Thus we conclude that
$$\displaystyle \frac{\text{log }p}{n_p}\geq \frac{\text{log }p}{p}+O\left(\frac{\text{log }p}{p^{3/2}}\right)$$
for all primes $p$ of good reduction for $C.$ Therefore,
$${\sum_{p\leq s}}^* \frac{\text{log }p}{n_p}\geq {\sum_{p\leq s}}^* \frac{\text{log }p}{p}+O(1)$$
and hence $$\text{log }T\geq \frac{s^2}{2}{\sum_{p\leq s}}^* \frac{\text{log }p}{p}+O(s^2).$$
But by (3),
$$\displaystyle\sum_{p\leq s} \frac{\text{log }p}{p}-{\sum_{p\leq s}}^* \frac{\text{log }p}{p}\leq \text{log }\text{log }B+O(1)$$
and $\displaystyle \sum_{p\leq s} \frac{\text{log }p}{p}=\text{log }s+O(1)$ (see [14], p. 14). Hence,
\begin{equation}\label{4}
\displaystyle \text{log }T\geq \frac{s^2}{2}\text{log}\left(\frac{s}{\text{log }B}\right)+O(s^2).
\end{equation}
Thus from (2) and (4) we obtain
$$ \displaystyle \text{log}\left(\frac{|\text{det}(\Delta)|}{T}\right)\leq s\text{log }s+s\text{log }B^{a+Ab}-\frac{s^2}{2}\text{log}\left(\frac{s}{\text{log }B}\right)+O(s^2)$$
$$=\displaystyle \frac{s^2}{2}\left(\log B^{\frac{2(a+Ab)}{s}}-\text{log}\left(\frac{s}{\text{log }B}\right)\right)+O(s^2).$$
There is therefore an absolute constant $u\geq 1$ such that
$$\displaystyle \text{log}\left(\frac{|\text{det}(\Delta)|}{T}\right)\leq \frac{s^2}{2}\left(\log B^{\frac{2(a+Ab)}{s}}-\text{log}\left(\frac{s}{u\text{log }B}\right)\right).$$
If
\begin{equation}
\displaystyle s>u B^{\frac{2(a+Ab)}{s}}\text{log }B
\end{equation}
we have in particular that $\log \left(\frac{|\det (\Delta)|}{T}\right)<0$ and hence det$(\Delta)=0$ as $\frac{|\det (\Delta)|}{T}\in \mathbb{Z}_{\geq 0}.$

Recall that by Lemma 2.3 we have that $s=4(m^2 a +b)$ if $a\geq 1$ and $b\geq m^2.$ We now choose $b=m^2$ and
$$a=1+\left[\frac{uB^{\frac{1}{2m^2}}\log B}{m^2}+A\log B\right].$$
Then
$$u B^{\frac{2(a+Ab)}{s}}\text{log }B=u B^{\frac{a+Am^2}{2m^2(a+1)}}\text{log }B $$
$$< u B^{\frac{1}{2m^2}}B^{\frac{A}{2a}}\log B<s.$$
Thus (5) holds such that det$(\Delta)=0$ for any $s\times s$ minor det$(\Delta)$ of $M$. As rank$(M)<s$, there is thus a bihomogeneous form in $\mathbb{Q}[x_0,x_1,x_2,x_3;y_0,y_1,y_2,y_3]$ which vanishes at all $(P_i,Q_i)\in X(\mathbb{Q})$, $1\leq i\leq N,$ with $H(P_i)\leq B$ but not everywhere on $X$. Hence (see (1))
$$N\leq 4(m^2 a+b)\ll \left(B^{\frac{1}{2m^2}}+m^2\right)\log B$$
$$\Rightarrow N(B)\ll m^r \left(B^{\frac{1}{2m^2}}+m^2\right)\log B.$$
This completes the proof of Theorem 1.1.

\section{\textbf{Savings for curves of large height}}

The main goal of this section is to prove Theorem 3.1, which is the key result to obtain Theorem 1.3. For a curve $C$ in $\mathbb{P}^3$ given by a non-singular complete intersection of two quadrics, Heath-Brown [6] showed that $N(B)\ll_{\varepsilon} B^{1/2+\varepsilon}$ for the number $N(B)$ of rational points of height at most $B$ on $C$ by his $p$-adic determinant method. We will use a refinement of that method where we make use of extra factors in the determinant which come from the coefficients of the quadratic forms defining $C$. To do this, we first need to define a height function on a parameter variety of such quartic curves. Unfortunately we do not have any improvement for general non-singular complete intersections of two quadrics in $\mathbb{P}^3$. In this section we will therefore only discuss the case where $C$ is a non-singular complete intersection defined by two simultaneously diagonal quadratic forms.

Let $V$ be the 4-dimensional vector space of diagonal quadratic forms \linebreak $a_0 x_0^2+a_1 x_1^2+a_2 x_2^2+a_3 x_3^2$ with coefficients in $\mathbb{Q}$. Then if $q,r\in V$ are linearly independent, we get a complete intersection $q=r=0$ in $\mathbb{P}^3$ which only depends on the vector space $W\subset V$ spanned by $q$ and $r$. As the 2-dimensional subspaces of $W$ are parametrized by the Grassmannian $\text{Gr}(2,V)$, we therefore get a universal family $\mathcal{F}\subset \mathbb{P}^3 \times \text{Gr}(2,V)$ of quartic space curves $C\subset \mathbb{P}^3$ over $\text{Gr}(2,V)$. If we use Pl\"ucker \linebreak coordinates for $\text{Gr}(2,V)$, then $W=\langle a_0 x_0^2+a_1 x_1^2+a_2 x_2^2+a_3 x_3^2, b_0 x_0^2+b_1 x_1^2+b_2 x_2^2+b_3 x_3^2\rangle$ is uniquely determined by the sixtuple $$d_{ij}=\left|
                                                                    \begin{array}{cc}
                                                                      a_i & a_j \\
                                                                      b_i & b_j \\
                                                                    \end{array}
                                                                  \right|,
\text{ }0\leq i<j\leq 3$$ in $\mathbb{P}^5$. We will therefore define the height $H(C)$ of the quartic curve
$$a_0 x_0^2+a_1 x_1^2+a_2 x_2^2+a_3 x_3^2=b_0 x_0^2+b_1 x_1^2+b_2 x_2^2+b_3 x_3^2=0,$$
with integral coefficients $a_i,b_i,$ to be the height of the sixtuple $(d_{ij};0\leq i<j\leq 3)$ in $\mathbb{P}^5(\mathbb{Q})$. We have thus
$$\displaystyle H(C):=\max_{0\leq i<j\leq 3}(|d_{ij}|)/\gcd_{0\leq i<j \leq 3}(d_{ij}).$$
The main result of this section is the following

\begin{thm} Let $C$ be as in Theorem 1.3, we have
$$N(B)\ll_{\varepsilon} B^{1/2+\varepsilon}/H(C)^{1/8}+\log B +1.$$ \end{thm}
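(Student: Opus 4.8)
The plan is to adapt Heath-Brown's $p$-adic determinant method from [6], which already gives $N(B)\ll_\varepsilon B^{1/2+\varepsilon}$, and to locate inside the determinant an additional integer factor whose size is a fixed positive power of $H(C)$. The starting point is that, because $q$ and $r$ are simultaneously diagonal, the squaring map $[x_0,x_1,x_2,x_3]\mapsto[x_0^2,x_1^2,x_2^2,x_3^2]$ sends every rational point of $C$ to a rational point on the line $L\subset\mathbb{P}^3$ cut out by $\sum_i a_i u_i=\sum_i b_i u_i=0$. The Pl\"ucker coordinates of $L$ are precisely the $d_{ij}$, so the primitive lattice $\Lambda=\{(u_i)\in\mathbb{Z}^4:\sum_i a_iu_i=\sum_i b_iu_i=0\}$ has covolume $\asymp H(C)$. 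Thus each point counted by $N(B)$ yields a point of $\Lambda$ in the box $|u_i|\le B^2$ all of whose coordinates are perfect squares. First I would fix a Minkowski-reduced basis $\mathbf v_1,\mathbf v_2$ of $\Lambda$, with successive minima $\lambda_1\le\lambda_2$ satisfying $\lambda_1\lambda_2\asymp H(C)$, and write $u=s\mathbf v_1+t\mathbf v_2$, so that $s\ll B^2/\lambda_1$ and $t\ll B^2/\lambda_2$.

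Next I would run the determinant method in the $(s,t)$-coordinates adapted to $\Lambda$. As in the proof of Theorem~1.1, one partitions the points of height $\le B$ into residue classes modulo primes $p$ in a range to be optimised, forms for each class the matrix of values of a spanning set of monomials of a chosen degree, and forces each maximal minor $\det(\Delta)$ to vanish by comparing its archimedean size against the product of the local factors $p^{N_p}$ produced by Lemma~2.4 and Lemma~2.5. The improvement over [6] enters at the archimedean step: since the square-coordinates occupy a box of dimensions $(B^2/\lambda_1)\times(B^2/\lambda_2)$ rather than $B^2\times B^2$, evaluating the auxiliary monomials in the reduced basis shrinks every entry of $\Delta$, hence the Hadamard bound for $|\det(\Delta)|$, by a factor that is a fixed power of $\lambda_1\lambda_2\asymp H(C)$. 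Equivalently, writing the relations $\sum_i a_iu_i=\sum_i b_iu_i=0$ in primitive integral form extracts the $d_{ij}$ as explicit common factors of the columns of $\Delta$; these are the extra factors coming from the coefficients of $q$ and $r$.

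With this sharpened archimedean bound I would re-balance the parameters — the degree of the auxiliary form, the cut-off for the primes $p$, and the number of points $N$ — exactly as in Section~2, but now carrying the additional $H(C)$-factor through the optimisation. Choosing the degree so as to minimise the number of auxiliary hypersurfaces, then bounding the points on each by Bézout's theorem against $C$, should give a total of shape $B^{1/2+\varepsilon}/H(C)^{1/8}$. The residual term $\log B+1$ absorbs the degenerate regimes: when $\lambda_2$ is so large that the box contains too few lattice points one counts directly, and when $\Lambda$ has a very short vector $\mathbf v_1$ the points collapse onto a one-parameter family $u\approx s\mathbf v_1$ on which the simultaneous-square condition forces $s$ into a single square class, so an elementary estimate yields $O(\log B)$ solutions; the same term covers the range where $H(C)$ is so large that the main term falls below $1$.

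I expect the main obstacle to be pinning down the exponent $1/8$. This requires quantifying exactly how much of $H(C)=\lambda_1\lambda_2$ survives as a genuine determinant factor after the optimisation: the saving is not symmetric in $\lambda_1$ and $\lambda_2$, and the worst case that dictates the exponent is the balanced one $\lambda_1\asymp\lambda_2\asymp H(C)^{1/2}$, for which the determinant-method scaling converts the power of $H(C)$ appearing in the archimedean bound into the power $H(C)^{1/8}$ in the final count. Controlling the interaction between the local factors $p^{N_p}$ and these archimedean savings — in particular, keeping the reduced-basis estimates valid uniformly over the relevant primes and over the degenerate lattice configurations handled by the $\log B+1$ term — is the delicate point.
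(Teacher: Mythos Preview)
Your overall plan --- refine the $p$-adic determinant method by exploiting the diagonal structure to produce an extra factor of a power of $H(C)$ in the determinant, then rebalance --- is exactly the paper's strategy, and your observation that the squaring map sends $C$ onto the line with Pl\"ucker coordinates $(d_{ij})$ is the right geometric input. But the \emph{mechanism} you propose for extracting the $H(C)$-saving is different from the paper's and has a real gap. You want an archimedean saving: pass to a reduced basis of the rank-$2$ lattice $\Lambda$ containing the squared quadruples $(x_0^2,\dots,x_3^2)$, rewrite everything in $(s,t)$, and shrink the Hadamard bound. The problem is that only the squares $u_i=x_i^2$ lie in the $2$-dimensional lattice; the $x_i$ do not. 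The space $S_{2k}$ has dimension $8k$, and any monomial basis necessarily contains terms with odd exponents (e.g.\ $x_0^{2k-1}x_2$), which are not polynomials in the $u_i$ and cannot be rewritten via the change $(u_0,\dots,u_3)\mapsto(s,t)$. So your claim that ``evaluating the auxiliary monomials in the reduced basis shrinks every entry of $\Delta$'' does not go through, and the alternative you offer --- that the $d_{ij}$ appear as column factors of $\Delta$ --- is also not correct.

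The paper obtains the saving as an \emph{arithmetic divisibility}, coprime to the $p$-adic factor. The key missing lemma is this (Lemma~3.4): for a primitive pair $(q,r)$ and any two rational points $\underline{x},\underline{y}$ on $C$, one has $x_k^2y_l^2-x_l^2y_k^2=\lambda(a_ib_j-a_jb_i)$ with $\lambda\in\mathbb{Z}$ whenever $\{i,j,k,l\}=\{0,1,2,3\}$; this is exactly the Pl\"ucker/dual-Pl\"ucker relation you noticed, read pointwise. One then chooses a specific basis of $S_{2k}$ in eight blocks, each of the form $(\text{fixed squarefree monomial})\cdot x_0^{2\alpha}x_1^{2\beta}$, and expands $\det(M_{2k})$ by iterated Laplace expansion block by block. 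After pulling the fixed factor out of each row, each block becomes a Vandermonde in the pairs $(x_{0i}^2,x_{1i}^2)$, hence factors as a product of terms $x_{0i}^2x_{1j}^2-x_{0j}^2x_{1i}^2$, each divisible by $H(C)=|a_2b_3-a_3b_2|$. Counting across the eight blocks gives $H(C)^{4k^2-4k+1}\mid\det(M_{2k})$ (Proposition~3.5). Combining with the standard factor $p^{4k(8k-1)}$ and choosing $p$ appropriately yields $N(B)\ll_k B^{4k/(8k-1)}/H(C)^{(4k^2-4k+1)/(4k(8k-1))}+1+\log B$, and letting $k\to\infty$ gives the exponent $1/8$. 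Incidentally, the $+\log B+1$ does not come from degenerate lattice configurations as you suggest, but from the need to find a prime of good reduction in a short interval: since $\|q\|\|r\|\ll B^{160}$ there are $O(\log B/\log\log B)$ bad primes, so the interval must have width $\gg 1+\log B$.
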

This is an analog of Proposition 2.1 in Ellenberg and Venkatesh [4] where the authors showed a similar estimate for irreducible hypersurfaces in $\mathbb{P}^n$. Before proving Theorem 3.1, we will need various preliminary results for non-singular quartic space curves defined by two simultaneously diagonal quadratic forms.

\begin{defi} We will call a pair of quadratic forms $q=a_0 x_0^2+a_1 x_1^2+a_2 x_2^2+a_3 x_3^2$, $r=b_0 x_0^2+b_1 x_1^2+b_2 x_2^2+b_3 x_3^2$ in $\mathbb{Z}[x_0,x_1,x_2,x_3]$ primitive if $\gcd_{i\neq j}(a_ib_j-a_jb_i)=1$. \end{defi}

We can assume that $C\subset \mathbb{P}^3$ is defined by a primitive pair $(q,r)$ in $\mathbb{Z}[x_0,x_1,x_2,x_3]$ by the following lemma.

\begin{lem} Let $\underline{a}=(a_0,a_1,a_2,a_3)$, $\underline{b}=(b_0,b_1,b_2,b_3)\in \mathbb{Z}^4$ be quadruples with $a_i b_j -a_j b_i \neq 0$ for some $i,j$. Then there exists $\underline{c}=(c_0,c_1,c_2,c_3)$, $\underline{d}=(d_0,d_1,d_2,d_3)\in \mathbb{Z}^4$ such that $\gcd_{i\neq j} (c_id_j-c_jd_i)=1$ and such that $\underline{c}$ and $\underline{d}$ span the same 2-dimensional vector space as $\underline{a}$ and $\underline{b}$. \end{lem}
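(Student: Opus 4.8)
The assertion is purely one of linear algebra over $\mathbb{Z}$: the rational plane spanned by $\underline{a}$ and $\underline{b}$ should admit an integral basis whose $2\times 2$ minors are globally coprime. My plan is to replace the lattice $L=\mathbb{Z}\underline{a}+\mathbb{Z}\underline{b}$ by its saturation inside $\mathbb{Z}^4$, and then to read off coprimality from the theory of elementary divisors.

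First I would record the behaviour of the minors under a change of basis. If $\underline{c}=\alpha\underline{a}+\beta\underline{b}$ and $\underline{d}=\gamma\underline{a}+\delta\underline{b}$, a direct expansion yields
$$c_id_j-c_jd_i=(\alpha\delta-\beta\gamma)\,(a_ib_j-a_jb_i)\qquad\text{for all }i,j,$$
so the sixtuple of minors is simply multiplied by the determinant of the transition matrix. Since $a_ib_j-a_jb_i\neq 0$ for some pair, $\underline{a}$ and $\underline{b}$ are linearly independent over $\mathbb{Q}$, so that $U:=\mathbb{Q}\underline{a}+\mathbb{Q}\underline{b}$ is a genuine $2$-dimensional subspace of $\mathbb{Q}^4$.

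Next I would form the saturation $\widetilde{L}:=U\cap\mathbb{Z}^4$. This is a rank-$2$ sublattice of $\mathbb{Z}^4$ whose quotient $\mathbb{Z}^4/\widetilde{L}$ is torsion-free, and it spans the same space $U$ as $\underline{a}$ and $\underline{b}$. I would then choose any $\mathbb{Z}$-basis $\underline{c},\underline{d}$ of $\widetilde{L}$; by construction $\underline{c}$ and $\underline{d}$ span the same plane as $\underline{a}$ and $\underline{b}$, so only the coprimality of their minors remains to be verified.

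For this I would invoke the Smith normal form. Writing the $2\times 4$ matrix with rows $\underline{c},\underline{d}$ as $S\,D\,T$ with $S\in\mathrm{GL}_2(\mathbb{Z})$, $T\in\mathrm{GL}_4(\mathbb{Z})$ and $D$ carrying invariant factors $e_1\mid e_2$, the torsion subgroup of $\mathbb{Z}^4/\widetilde{L}$ is isomorphic to $\mathbb{Z}/e_1\oplus\mathbb{Z}/e_2$; saturation forces $e_1=e_2=1$. On the other hand the gcd of all $2\times 2$ minors is unchanged by the unimodular factors $S$ and $T$ and equals the product $e_1e_2$, whence $\gcd_{i\neq j}(c_id_j-c_jd_i)=1$, as required. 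The one step deserving care is precisely this last identity between the gcd of the minors and the product of invariant factors, that is, the second determinantal divisor; everything else is formal. As a sanity check, the transformation law of the second step shows that $\gcd_{i\neq j}(a_ib_j-a_jb_i)$ equals the index $[\widetilde{L}:L]$, which is exactly the factor that passing to the saturation removes.
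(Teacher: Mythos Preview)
Your proof is correct and follows essentially the same approach as the paper: form the saturation $W\cap\mathbb{Z}^4$ of the rational plane and take any $\mathbb{Z}$-basis of it. The paper simply asserts without further argument that generators of this saturated lattice have coprime minors, whereas you supply the justification via Smith normal form; so your version is a more fleshed-out rendering of the same idea.
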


\begin{proof} Let $W\subset \mathbb{Q}^4$ be the vector space spanned by $\underline{a}$ and $\underline{b}$ and $L=W\cap \mathbb{Z}^4$. Then $L$ is a free $\mathbb{Z}$-module of rank 2 and any two generators $\underline{c}$ and $\underline{d}$ will satisfy the conditions. \end{proof}

Hence we only need to prove Theorem 3.1 for curves defined by primitive pairs of quadratic forms. The benefit of being primitive is the following result.

\begin{lem} Let $q=a_0 x_0^2+a_1 x_1^2+a_2 x_2^2+a_3 x_3^2$, $r=b_0 x_0^2+b_1 x_1^2+b_2 x_2^2+b_3 x_3^2$  in  $\mathbb{Z}[x_0,x_1,x_2,x_3]$ be a primitive pair. Let $\underline{x}=(x_0,x_1,x_2,x_3),$ $\underline{y}=(y_0,y_1,y_2,y_3)\in \mathbb{Z}^4$ be such that  $q(\underline{x})=r(\underline{x})=0$ and $q(\underline{y})=r(\underline{y})=0.$ Then there exists an integer $\lambda$ such that
$$|x_k^2 y_l^2 - x_l^2 y_k^2|=\lambda |a_ib_j-a_jb_i|$$
for any  $i,j,k,l\in \{0,1,2,3\}$ with $\{i,j,k,l\}=\{0,1,2,3\}.$ \end{lem}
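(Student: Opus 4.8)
The plan is to pass from the quadratic forms to a linear problem by squaring the coordinates, and then to read off the identity from the relation between the Pl\"ucker coordinates of a $2$-plane and those of its orthogonal complement in $\mathbb{Q}^4$. First I would set $\underline{u}=(x_0^2,x_1^2,x_2^2,x_3^2)$ and $\underline{v}=(y_0^2,y_1^2,y_2^2,y_3^2)$ in $\mathbb{Z}^4$, and write $p_{ij}=a_ib_j-a_jb_i$ for the Pl\"ucker coordinates of $W=\langle\underline{a},\underline{b}\rangle\subset\mathbb{Q}^4$. The hypotheses $q(\underline{x})=r(\underline{x})=0$ and $q(\underline{y})=r(\underline{y})=0$ say exactly that $\underline{u}$ and $\underline{v}$ are orthogonal to both $\underline{a}$ and $\underline{b}$ for the standard inner product, i.e. $\underline{u},\underline{v}\in W^{\perp}$. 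Since the pair is primitive, not all $p_{ij}$ vanish, so $\underline{a},\underline{b}$ are linearly independent, $\dim W=2$, and hence $\dim W^{\perp}=2$.

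Next I would consider the bivector $\underline{u}\wedge\underline{v}\in\Lambda^2\mathbb{Q}^4$, whose $(k,l)$-entry is precisely $u_kv_l-u_lv_k=x_k^2y_l^2-x_l^2y_k^2$. Because $\underline{u},\underline{v}\in W^{\perp}$, this bivector lies in the line $\Lambda^2 W^{\perp}$. The standard relation between a $2$-plane and its orthogonal complement in $4$-space (the Hodge star $\star$ carries $\Lambda^2 W$ to $\Lambda^2 W^{\perp}$ and acts on the basis by $\star(e_i\wedge e_j)=\pm\,e_k\wedge e_l$ for complementary indices) shows that $\Lambda^2 W^{\perp}$ is spanned by the vector whose $(k,l)$-coordinate equals $\pm p_{ij}$, where $\{i,j,k,l\}=\{0,1,2,3\}$. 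Consequently there is a single rational scalar $\mu$ with $x_k^2y_l^2-x_l^2y_k^2=\pm\,\mu\,(a_ib_j-a_jb_i)$ for every choice of complementary indices, the sign being that of the relevant permutation. Taking absolute values yields $|x_k^2y_l^2-x_l^2y_k^2|=|\mu|\,|a_ib_j-a_jb_i|$ with the same factor $|\mu|$ for all index choices (swapping $k\leftrightarrow l$ or $i\leftrightarrow j$ only changes signs), and one sets $\lambda=|\mu|$.

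It then remains to check that $\lambda$ is a non-negative integer, and this is exactly where primitivity enters. Each $x_k^2y_l^2-x_l^2y_k^2$ and each $p_{ij}$ is an integer, so $\mu\,p_{ij}\in\mathbb{Z}$ for all pairs $i<j$ (as $\{k,l\}$ ranges over complements, $\{i,j\}$ ranges over all six $2$-subsets). Since $\gcd_{i\neq j}(p_{ij})=1$, B\'ezout gives integers $t_{ij}$ with $\sum_{i<j}t_{ij}p_{ij}=1$, whence $\mu=\sum_{i<j}t_{ij}(\mu p_{ij})\in\mathbb{Z}$ and $\lambda=|\mu|\in\mathbb{Z}_{\geq 0}$. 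The degenerate case in which $\underline{u}$ and $\underline{v}$ are linearly dependent is immediate: then $\underline{u}\wedge\underline{v}=0$, every $x_k^2y_l^2-x_l^2y_k^2$ vanishes, and $\lambda=0$ works.

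I expect the only genuine obstacle to be this integrality step. The proportionality of the two sides is forced by the geometry of orthogonal complements, so the content of the lemma is really that the common rational ratio $\mu$ is an integer, which is precisely the role of the primitivity hypothesis; one must make sure the B\'ezout argument uses all six Pl\"ucker coordinates simultaneously rather than a single one. A secondary point requiring a little care is the bookkeeping of signs in the Hodge-star identity, but this is harmless here since the statement involves only absolute values.
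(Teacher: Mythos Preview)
Your proof is correct and follows essentially the same route as the paper: pass to the squared coordinates $\underline{u},\underline{v}\in W^{\perp}$ and invoke the duality between Pl\"ucker coordinates of $W$ and of $W^{\perp}$ (the paper cites this as the relation between Grassmann and dual Grassmann coordinates in Hodge--Pedoe, which is your Hodge-star identity). If anything, you are more thorough than the paper, which leaves both the degenerate case $\underline{u}\parallel\underline{v}$ and the B\'ezout step deducing integrality of $\mu$ from $\gcd_{i\neq j}(p_{ij})=1$ implicit.
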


\begin{proof} Let $W' \subset \mathbb{Q}^4$ be the 2-dimensional subspace defined by the two equations
$$\left\{
    \begin{array}{ll}
      a_0 z_0+a_1 z_1+a_2 z_2+a_3 z_3 =0\\
      b_0 z_0+b_1 z_1+b_2 z_2+b_3 z_3 =0
    \end{array}
  \right.
.$$
Then $(x_0^2,x_1^2,x_2^2,x_3^2),$ $(y_0^2,y_1^2,y_2^2,y_3^2)\in W'.$ If these quadruples are linearly independent, then by the relation between Grassmann coordinates and dual Grassmann coordinates in [8, p. 294-297] we get that the sixtuples
$$(x_0^2 y_1^2 - x_1^2 y_0^2,x_0^2 y_2^2 - x_2^2 y_0^2,x_0^2 y_3^2 - x_3^2 y_0^2,x_1^2 y_2^2 - x_2^2 y_1^2,x_1^2 y_3^2 - x_3^2 y_1^2,x_2^2 y_3^2 - x_3^2 y_2^2)$$
and $(a_2 b_3 - a_3 b_2,a_1 b_3 - a_3 b_1,a_1 b_2 - a_2 b_1,a_0 b_3 - a_3 b_0,a_0 b_2 - a_2 b_0,a_0 b_1 - a_1 b_0)$ will define the same rational point on $\mathbb{P}^5$ (up to signs of the coordinates). Hence the statement follows from the primitivity of $(\underline{a},\underline{b}).$ \end{proof}

We are now ready to prove Theorem 3.1 by using the $p$-adic determinant method.

\text{}\\
\emph{Proof of Theorem 3.1.} The idea is to divide all rational points of height at most $B$ on $C$ into congruence classes modulo some prime number $p$ of good reduction for $C$ and then count points in each class. By Hasse's theorem, there are then at most $p+1+2\sqrt{p}$ congruence classes (mod $p$).

Since $C$ is a non-singular curve of genus 1 and degree 4 in $\mathbb{P}^3$, we have by the \linebreak Riemann-Roch theorem that $\dim H^0 (C, \mathcal{O}_C (k))=4k$ for all positive integer $k$. Hence, as the morphism $H^0 (\mathbb{P}^3, \mathcal{O}_{\mathbb{P}^3} (k))\rightarrow H^0 (C, \mathcal{O}_C (k))$ is surjective (see Hartshorne [5, p. 188]), its homogeneous coordinate ring
$$\mathbb{Q}[x,y,z,t]/(q,r)=\bigoplus_{k\geq 0}S_k$$
satisfies dim$_\mathbb{Q}S_k=4k$ for all $k\geq 1$.

Let $p$ be a prime of good reduction for $C$, we then denote by $N(B,p,P)$ the number of points of height at most $B$ in $C(\mathbb{Q})$ which specialise to $P$ on $C(\mathbb{F}_p)$. For a given degree $2k$, we first fix $8k$ monomials $\{f_j\}$, $1\leq j\leq 8k$ of degree $2k$ which form a basis for $S_{2k}$. Our goal is to prove that det$(M_{2k})=0$ for any $8k\times 8k$-matrix $M_{2k}=(f_j(P_i))_{i,j}$, where $\{P_i\}$, $1\leq i \leq 8k$ are $8k$ points counted by $N(B,p,P)$. Note that we consider monomials of degree $2k$ instead of $k$ and we will see why. If we can choose $p$ such that det$(M_{2k})=0$ for all such sets $\{P_i\}$, then there exists a homogeneous polynomial $G=c_1f_1+...+c_{8k}f_{8k}$ of degree $2k$ which contains all the points counted by $N(B,p,P)$ but which does not contain $C$. By the theorem of B\'ezout, we have then that $N(B,p,P)\leq 8k$ for any point $P$ on $C(\mathbb{F}_p)$.

To get the vanishing of det$(M_{2k})$, we first give an upper bound and then a factor of the integer det$(M_{2k})$ which is larger than the bound. Since all the points are of height at most $B$, we get the following upper bound by using Hadamard's inequality:
\begin{equation}
|\det(M_{2k})|\leq (8k)^{4k}B^{16k^2}.
\end{equation}
To find a factor of det$(M_{2k})$, we may after elementary row operations in $M_{2k}$ over $\mathbb{Z}_p$ arrange such that all elements in the $i$-th row is divisible by $p^{i-1}$ (see the proofs of [11, Lemma 2.4] and [6, Theorem 14]). Hence
\begin{equation}
p^{4k(8k-1)}| \text{det}(M_{2k}).
\end{equation}
There are also other factors of det$(M_{2k})$ coming from the height of $C$.

\begin{prop} Let $C$ in $\mathbb{P}^3$ be a non-singular complete intersection defined by two quadratic forms $q=a_0x_0^2+a_1x_1^2+a_2x_2^2+a_3x_3^2$ and $r=b_0x_0^2+b_1x_1^2+b_2x_2^2+b_3x_3^2$ with integral coefficients. Then for any positive integer $k$, there exists a basis $\{f_1,...,f_{8k}\}$ of $S_{2k}$ such that the determinant of $M_{2k}^*=(f_j(P_i))_{i,j}$ is divisible by $H(C)^{4k^2-4k+1}$ for arbitrary $8k$ rational points $\{P_i\}$ on $C$. \end{prop}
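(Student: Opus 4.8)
The plan is to produce a single integral quadratic form $\ell$ whose value at \emph{every} rational point of $C$ is divisible by $H(C)$, and then to assemble a basis of $S_{2k}$ from powers of $\ell$ so that column $j$ of $M_{2k}^*$ becomes divisible by $H(C)^{m_j}$ for suitable exponents $m_j$. This mirrors the $p$-adic reduction step carried out above, with the integer $H(C)$ playing the role of the prime $p$: Lemma 3.4 says precisely that all rational points of $C$ become congruent modulo $H(C)$ under a suitable quadratic map, so $H(C)$ behaves like a modulus of good reduction.

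After permuting the coordinates I may assume $|d_{23}|=\max_{i<j}|d_{ij}|=H(C)$, the last equality because primitivity gives $\gcd_{i\ne j}(d_{ij})=1$. Fix any $R=(y_0:y_1:y_2:y_3)\in C(\mathbb{Q})$ with coprime integer coordinates and set $\ell:=y_1^2 x_0^2-y_0^2 x_1^2$. For an arbitrary $P=(x_0:\cdots:x_3)\in C(\mathbb{Q})$, Lemma 3.4 applied to $(P,R)$ with $\{k,l\}=\{0,1\}$ and $\{i,j\}=\{2,3\}$ gives $|\ell(P)|=|x_0^2 y_1^2-x_1^2 y_0^2|=\lambda|d_{23}|=\lambda H(C)$, so $H(C)\mid \ell(P)$ for all $P\in C(\mathbb{Q})$. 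I should first check that $\ell\ne 0$ in $S$: if $R$ had $y_0=y_1=0$ then $a_2 y_2^2+a_3 y_3^2=b_2 y_2^2+b_3 y_3^2=0$ with $(y_2^2,y_3^2)\ne 0$, forcing $d_{23}=0$, which is impossible; and since $d_{23}\ne 0$ no nontrivial combination $\mu q+\nu r$ can equal $\ell$ (matching the vanishing $x_2^2,x_3^2$ coefficients would force $\mu=\nu=0$), so $\ell\notin(q,r)$. As $C$ is integral, $S$ is a domain and multiplication by $\ell$ is injective.

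I would then filter $S_{2k}$ by $V_m:=\ell^m S_{2(k-m)}$ for $0\le m\le k$, giving $S_{2k}=V_0\supseteq V_1\supseteq\cdots\supseteq V_k\supseteq 0$ with $\dim V_m=\dim S_{2(k-m)}$. Since $\dim S_{2j}=8j$ for $j\ge 1$ and $\dim S_0=1$, the graded pieces $V_m/V_{m+1}$ have dimension $8$ for $0\le m\le k-2$, dimension $7$ for $m=k-1$, while $V_k$ has dimension $1$. For each $m$ I pick monomials $g^{(m)}_i\in S_{2(k-m)}$ whose images in $S_{2(k-m)}/\ell S_{2(k-m-1)}$ form a basis of that quotient; then $\{\ell^m g^{(m)}_i\}$ is a basis of $S_{2k}$ consisting of $8k$ integral forms, the ones at level $m$ being divisible by $\ell^m$.

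Finally, a form divisible by $\ell^m$ takes values divisible by $H(C)^m$ at integral points, so the corresponding column of $M_{2k}^*$ is divisible by $H(C)^m$; factoring column by column yields $H(C)^{\sum_j m_j}\mid\det(M_{2k}^*)$ with
$$\sum_j m_j=\sum_{m=0}^{k-2}8m+7(k-1)+k=4(k-2)(k-1)+7(k-1)+k=4k^2-4k+1,$$
exactly the claimed exponent $(2k-1)^2$. The crux is Lemma 3.4, which supplies the quadratic form divisible by the \emph{largest} Pl\"ucker coordinate $H(C)$ at every rational point; once that congruence is in hand the argument is pure linear algebra, the only care needed being the dimensions of the graded pieces. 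This also explains the earlier choice of degree $2k$ rather than $k$: $\ell$ is quadratic, so only even degrees carry the full tower $\ell^0,\ell^1,\dots,\ell^k$, and it is precisely that tower which produces the optimal exponent.
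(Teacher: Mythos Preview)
Your argument is correct and takes a genuinely different route from the paper's. The paper fixes an explicit \emph{monomial} basis of $S_{2k}$ (arranged into eight blocks according to the parities of the exponents of $x_0,x_1$ and the presence of $x_2,x_3$) and then performs iterated Laplace expansions so that each of the $(8k)!$ resulting terms contains a product of Vandermonde determinants in the variables $(x_{0i}^2,x_{1i}^2)$; every Vandermonde factor $x_{0i}^2x_{1j}^2-x_{0j}^2x_{1i}^2$ is divisible by $H(C)$ via Lemma~3.4 applied to the \emph{pair} $(P_i,P_j)$, and counting these factors across the eight blocks gives $\binom{k+1}{2}+6\binom{k}{2}+\binom{k-1}{2}=4k^2-4k+1$. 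You instead fix one auxiliary rational point $R$ and apply Lemma~3.4 to the pairs $(P,R)$, producing a single integral quadratic form $\ell$ with $H(C)\mid\ell(P)$ for every $P\in C(\mathbb{Q})$; the filtration $\ell^m S_{2(k-m)}$ then gives column-by-column divisibility without any Laplace or Vandermonde combinatorics, and your count $\sum_{m=0}^{k-2}8m+7(k-1)+k$ recovers the same exponent $(2k-1)^2$. Your route is cleaner and makes the analogy with the $p$-adic step transparent (powers of $\ell$ playing the role of powers of a uniformiser); the small prices are that your basis depends on an auxiliary $R\in C(\mathbb{Q})$, so you should dispose of the case $C(\mathbb{Q})=\emptyset$ explicitly as vacuous, and that the resulting basis consists of integral polynomials rather than monomials, which is harmless for the application. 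One further remark: the reduction to a primitive pair via Lemma~3.3, which you invoke implicitly when writing $H(C)=|d_{23}|$, should be stated at the outset as the paper does.
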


The proof of Proposition 3.5 is the most technical part of this paper. We first recall a well-known result from linear algebra.

\begin{lem} [\emph{Vandermonde determinant}]
$$\left|
    \begin{array}{ccccc}
      \alpha_1^k & \alpha_1^{k-1}\beta_1 & ... & \alpha_1\beta_1^{k-1} & \beta_1^k \\
      \alpha_2^k & \alpha_2^{k-1}\beta_2 & ... & \alpha_2\beta_2^{k-1} & \beta_2^k \\
      \vdots & \vdots & ... & \vdots & \vdots \\
      \alpha_{k+1}^k & \alpha_{k+1}^{k-1}\beta_{k+1} & ... & \alpha_{k+1}\beta_{k+1}^{k-1} & \beta_{k+1}^k \\
    \end{array}
  \right|
  =\prod_{1\leq i <j \leq k+1}(\alpha_i\beta_j-\alpha_j\beta_i).
$$
\end{lem}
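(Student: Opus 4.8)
The plan is to recognize the stated determinant as the homogenization of the classical Vandermonde determinant and to reduce to it by dehomogenizing in each row. Write $D$ for the $(k+1)\times(k+1)$ determinant on the left, whose $(i,j)$ entry is $\alpha_i^{\,k+1-j}\beta_i^{\,j-1}$. Since every entry in row $i$ is homogeneous of degree $k$ in $(\alpha_i,\beta_i)$, both sides are polynomials in the $2(k+1)$ variables $\alpha_1,\dots,\alpha_{k+1},\beta_1,\dots,\beta_{k+1}$, homogeneous of total degree $k(k+1)$; it therefore suffices to verify the identity on the Zariski-dense open set where all $\beta_i\neq 0$, and then invoke the fact that two polynomials agreeing on a dense set are equal.

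On that open set, first factor $\beta_i^{\,k}$ out of the $i$-th row, so that $D=\left(\prod_{i=1}^{k+1}\beta_i^{\,k}\right)\det\big(t_i^{\,k+1-j}\big)_{i,j}$, where $t_i:=\alpha_i/\beta_i$. The matrix $\big(t_i^{\,k+1-j}\big)$ has the columns of the standard Vandermonde matrix $\big(t_i^{\,j-1}\big)$ listed in reverse order, so reversing the $k+1$ columns---a permutation with $\binom{k+1}{2}$ inversions---gives $\det\big(t_i^{\,k+1-j}\big)=(-1)^{\binom{k+1}{2}}\det\big(t_i^{\,j-1}\big)=(-1)^{\binom{k+1}{2}}\prod_{1\le i<j\le k+1}(t_j-t_i)$ by the classical Vandermonde identity.

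Next I would repackage the differences: since $t_j-t_i=(\alpha_j\beta_i-\alpha_i\beta_j)/(\beta_i\beta_j)$ and each index occurs in exactly $k$ of the pairs $\{i,j\}$, the denominators multiply to $\prod_i\beta_i^{\,k}$, which is cancelled by the prefactor, yielding $D=(-1)^{\binom{k+1}{2}}\prod_{i<j}(\alpha_j\beta_i-\alpha_i\beta_j)$. Finally, each of the $\binom{k+1}{2}$ factors satisfies $\alpha_j\beta_i-\alpha_i\beta_j=-(\alpha_i\beta_j-\alpha_j\beta_i)$, contributing a second factor of $(-1)^{\binom{k+1}{2}}$; the two signs cancel and $D=\prod_{i<j}(\alpha_i\beta_j-\alpha_j\beta_i)$, as claimed.

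There is no genuine obstacle here---this is a routine generalization of Vandermonde---so the only thing to watch is the sign bookkeeping, where one must check that the sign from the column reversal and the sign from the antisymmetry of the factors precisely cancel; both equal $(-1)^{\binom{k+1}{2}}$, so the final constant is $+1$. As an alternative that avoids the hypothesis $\beta_i\neq 0$ altogether, one could argue directly: each binomial $\alpha_i\beta_j-\alpha_j\beta_i$ divides $D$ because $D$ vanishes when rows $i$ and $j$ become proportional, these binomials are distinct irreducible polynomials, and a degree count ($k(k+1)$ on both sides) shows $D$ equals their product up to a constant, which is pinned down to $1$ by setting all $\beta_i=1$ and comparing with the classical Vandermonde determinant.
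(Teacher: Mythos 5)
Your proof is correct, and there is nothing in the paper to compare it against: the paper states this lemma as ``a well-known result from linear algebra'' and gives no proof at all. Both of your routes are sound --- the dehomogenization $D=\bigl(\prod_i\beta_i^k\bigr)\det\bigl(t_i^{\,k+1-j}\bigr)$ with $t_i=\alpha_i/\beta_i$, followed by the density argument, and the alternative divisibility-plus-degree-count argument --- and your sign bookkeeping checks out: the column reversal contributes $(-1)^{\binom{k+1}{2}}$, each of the $\binom{k+1}{2}$ factors flipped from $\alpha_j\beta_i-\alpha_i\beta_j$ to $\alpha_i\beta_j-\alpha_j\beta_i$ contributes another $(-1)^{\binom{k+1}{2}}$, and these cancel, as a check at $k=1$ (where $D=\alpha_1\beta_2-\alpha_2\beta_1$) confirms.
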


\text{}\\
\begin{proof}[Proof of Proposition 3.5.] By Lemma 3.3 we may assume that $(q,r)$ is a primitive pair such that the height $H(C)$ of $C$ is equal to $\max_{0\leq i<j \leq 3}(|a_ib_j-a_jb_i|).$ Suppose, without loss of generality, that $H(C)=|a_2b_3-a_3b_2|$. Then we choose the following basis for $S_{2k}$
$$\underbrace{x_0^{2k},x_0^{2k-2}x_1^2,...,x_1^{2k}}_{k+1},\underbrace{x_0^{2k-1}x_1,x_0^{2k-3}x_1^3,...,x_0x_1^{2k-1}}_{k},$$
$$\underbrace{x_0^{2k-1}x_2,x_0^{2k-3}x_1^2x_2,...,x_0x_1^{2k-2}x_2}_{k},\underbrace{x_0^{2k-2}x_1x_2,x_0^{2k-4}x_1^3x_2,...,x_1^{2k-1}x_2}_{k},$$
$$\underbrace{x_0^{2k-1}x_3,x_0^{2k-3}x_1^2x_3,...,x_0x_1^{2k-2}x_3}_{k},\underbrace{x_0^{2k-2}x_1x_3,x_0^{2k-4}x_1^3x_3,...,x_1^{2k-1}x_3}_{k},$$
$$\underbrace{x_0^{2k-2}x_2x_3,x_0^{2k-4}x_1^2x_2,...,x_1^{2k-2}x_2x_3}_{k},\underbrace{x_0^{2k-3}x_1x_2x_3,x_0^{2k-5}x_1^3x_2x_3,...,x_0x_1^{2k-3}x_2x_3}_{k-1}.$$
We denote by $x_{0i}^{k_0} x_{1i}^{k_1} x_{2i}^{k_2} x_{3i}^{k_3}$ the value of the monomial $x_0^{k_0} x_1^{k_1} x_2^{k_2} x_3^{k_3}$ at $P_i$. Using Laplace expansion along the first $k+1$ columns of det$(M_{2k}^*)$, we obtain that det$(M_{2k}^*)$ is a sum of $\binom{8k}{k+1}$ terms. For each of these terms, we use Laplace expansion along the first $k$ columns of the bigger matrix. We continue this process together with the order of the basis $\{f_1,f_2,...,f_{8k}\}$ above and make use of Lemma 3.6. We then conclude that det$(M_{2k}^*)$ can be written as a sum of $(8k)!$ terms such that each of these terms is divisible by (up to an order of $x_{0i}, x_{1i}$ when $i$ runs from $1$ to $8k$)
\begin{equation}
\prod_{\Omega}(x_{0i}^2x_{1j}^2-x_{0j}^2x_{1i}^2),
\end{equation}
where
$$\displaystyle \Omega=\{1\leq i<j\leq k+1\}\cup \{k+2\leq i<j\leq 2k+1\}\cup \{2k+2\leq i<j\leq 3k+1\}$$
$$\cup \{3k+2\leq i<j\leq 4k+1\}\cup \{4k+2\leq i<j\leq 5k+1\}\cup \{5k+2\leq i<j\leq 6k+1\}$$
$$\cup \{6k+2\leq i<j\leq 7k+1\}\cup \{7k+2\leq i<j\leq 8k\}.$$
The appearance of terms of the form $x_{0i}^2x_{1j}^2-x_{0j}^2x_{1i}^2$ is the reason why we considered monomials of degree $2k$ instead of $k.$ Thus we see from Lemma 3.4 that det$(M_{2k}^*)$ is divisible by $(a_2b_3-a_3b_2)^n$, where
$$n=\binom{k+1}{2}+6\binom{k}{2}+\binom{k-1}{2}=4k^2-4k+1.$$
This proves the proposition.\end{proof}

We now use this proposition to choose a basis $\{f_1,...,f_{8k}\}$ of $S_{2k}$ such that det$(M_{2k})$ is divisible by $H(C)^{4k^2-4k+1}$. This factor is relatively prime to the factor $p^{4k(8k-1)}$ in (7) as $H(C)$ is not divisible by any prime of good reduction for $C$. Hence we get that
\begin{equation}
p^{4k(8k-1)}H(C)^{4k^2-4k+1}| \text{det}(M_{2k}).
\end{equation}
From (6) and (9) we see that if $p$ satisfies the inequality
\begin{equation}
p> 8k^{\frac{1}{8k-1}}B^{\frac{4k}{8k-1}}/H(C)^{\frac{4k^2-4k+1}{4k(8k-1)}}
\end{equation}
then det$(M_{2k})=0$. Thus $N(B,p,P)\leq 8k$ for any point $P$ on $C(\mathbb{F}_p)$ and for any prime $p$ of good reduction for $C$ satisfying (10). The following lemma shows the existence of such a prime.

\begin{lem} For any integer $k\geq 1$, there is a prime $p$ of good reduction for $C$ such that
$$2B^{\frac{4k}{8k-1}}/H(C)^{\frac{4k^2-4k+1}{4k(8k-1)}}<p\ll B^{\frac{4k}{8k-1}}/H(C)^{\frac{4k^2-4k+1}{4k(8k-1)}}+1+\log B.$$ \end{lem}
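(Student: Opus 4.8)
The plan is to produce the required prime by a counting argument that balances the length of the search interval against the total logarithmic weight of the primes of bad reduction. Throughout write $\theta(x)=\sum_{p\le x}\log p$ for the Chebyshev function and abbreviate $Z:=B^{\frac{4k}{8k-1}}/H(C)^{\frac{4k^2-4k+1}{4k(8k-1)}}$, so that the assertion is the existence of a prime $p$ of good reduction for $C$ with $2Z<p\ll Z+\log B+1$. The two inputs I would use are, first, Lemma 2.6, which gives an absolute constant $A_0$ with $\log\Pi_C\le A_0\log B$ for $B\ge 3$ (so the product, hence the total $\log$-weight, of all bad primes is $O(\log B)$); and second, Chebyshev's estimates, which furnish absolute constants $0<c_1\le c_2$ with $c_1x\le\theta(x)$ for all $x\ge 2$ and $\theta(x)\le c_2x$ for all $x\ge 0$ (the bound being trivial for $0\le x<2$).

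First I would fix the right endpoint of the search interval as $\lambda:=\frac{2c_2}{c_1}Z+D(\log B+1)$, where $D>A_0/c_1$ is an absolute constant. Since $c_2\ge c_1$ we have $\frac{2c_2}{c_1}\ge 2$, so $\lambda>2Z$, and clearly $\lambda\ll Z+\log B+1$; thus any prime $p\in(2Z,\lambda]$ already meets both size requirements, and it remains only to show that $(2Z,\lambda]$ contains a prime of good reduction.

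The key step is a mass comparison. Suppose, for contradiction, that every prime in $(2Z,\lambda]$ is of bad reduction for $C$. Then each such prime divides $\Pi_C$, so their product divides $\Pi_C$ and hence
\begin{equation*}
\theta(\lambda)-\theta(2Z)=\sum_{2Z<p\le\lambda}\log p\le\log\Pi_C\le A_0\log B.
\end{equation*}
On the other hand, using $\theta(\lambda)\ge c_1\lambda$ (valid since $\lambda\ge D(\log B+1)>2$) and $\theta(2Z)\le 2c_2Z$ gives
\begin{equation*}
\theta(\lambda)-\theta(2Z)\ge c_1\lambda-2c_2Z=c_1D(\log B+1)>A_0\log B,
\end{equation*}
by the choice $D>A_0/c_1$. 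These two inequalities are incompatible, so some prime in $(2Z,\lambda]$ has good reduction, and this is the prime we seek.

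I expect the only genuine obstacle to be the uniformity of the argument across the two regimes concealed in the bound $\ll Z+\log B+1$: when $Z$ is large the interval $(2Z,\lambda]$ is long because of its multiplicative width, whereas when $Z$ is small it is the additive term $D(\log B+1)$ that supplies enough primes to beat the $O(\log B)$ bad ones. The Chebyshev mass comparison above handles both cases simultaneously, so the remaining points are purely bookkeeping: checking that $\theta(x)\le c_2x$ may be used for all $x\ge 0$, that $\lambda\ge 2$ so the lower Chebyshev bound applies, and noting that the factor $2$ in the statement is precisely what later dominates the constant $(8k)^{1/(8k-1)}<2$ appearing in $(10)$, so that the prime produced here automatically satisfies that inequality.
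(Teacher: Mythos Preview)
Your argument is correct. It differs from the paper's proof in the bookkeeping device: the paper compares \emph{counts} of primes, bounding the number of bad primes by $\omega(6|D_C|)\ll\dfrac{1+\log B}{\log(1+\log B)}$ and the number of primes in a dyadic interval $(A,2A]$ from below by $A/(2\log A)$, then takes $A=2Z+c_0(1+\log B)$. You instead compare \emph{logarithmic weights} via Chebyshev's $\theta$, bounding the bad primes directly by $\log\Pi_C\le A_0\log B$ from Lemma~2.6 and using $\theta(\lambda)-\theta(2Z)\ge c_1\lambda-2c_2Z$. Your route is marginally cleaner in that it avoids the $\omega(n)\ll\log n/\log\log n$ estimate and the accompanying $\log\log$ factors altogether, and it uses Lemma~2.6 as stated rather than converting it into a prime-count bound; the paper's route has the small advantage that the resulting interval is genuinely dyadic, $(A,2A]$, whereas yours has multiplicative width $2c_2/c_1$. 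Either way the underlying idea---the interval carries more prime mass than the bad primes can absorb---is the same, and both handle the two regimes (large $Z$ versus small $Z$) uniformly.

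One minor point: you invoke Lemma~2.6 in the form $\log\Pi_C\le A_0\log B$ for $B\ge 3$, but the lemma as stated in the paper has no explicit lower bound on $B$ and the paper's own proof of the present lemma writes $\log|D_C|\ll 1+\log B$. It is harmless to replace $A_0\log B$ by $A_0(1+\log B)$ throughout and then require $D>A_0/c_1$; the contradiction step becomes $c_1D(\log B+1)>A_0(\log B+1)$, which is cleaner and removes any implicit restriction on $B$.
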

\begin{proof} Since we are assuming that $\|q\|\|r\|\ll B^{160},$ the discriminant $D_C$ of $C$ will satisfy $\log |D_C|\ll 1+\log B$. The number of primes of bad reduction for $C$ is then at most
\begin{equation}
\omega (6|D_C|)\ll \frac{\log |D_C|}{\log \log |D_C|}\ll \frac{1+\log B}{\log (1+\log B)},
\end{equation}
where $\omega (n)$ denotes the number of prime divisors of $n$. However if $A$ is sufficient large there are at least $A/(2\log A)$ primes between $A$ and $2A.$ There is thus from (11) an absolute constant, $c_0$ say, such that any range $(A,2A]$ with $A\geq c_0 (1+\log B)$ contains a prime $p$ of good reduction. To complete the proof of the lemma we just need to take
$$A=2B^{\frac{4k}{8k-1}}/H(C)^{\frac{4k^2-4k+1}{4k(8k-1)}} +c_0 (1+\log B).$$ \end{proof}
We may now complete the proof of Theorem 3.1. Let $p$ be a prime satisfying \linebreak Lemma 3.7 and note that $8k^{\frac{1}{8k-1}}<2$ for all $k\geq 1$. We then get
$$N(B)\leq \sum_{P\in C(\mathbb{F}_p)} N(B,p,P)\leq \sum_{P\in C(\mathbb{F}_p)} 8k\ll_k  p$$
$$\ll_k B^{\frac{4k}{8k-1}}/H(C)^{\frac{4k^2-4k+1}{4k(8k-1)}}+1+\log B.$$
If we now let $k$ go to infinity then we obtain Theorem 3.1.

\section{\textbf{A uniform bound for quartic space curves}}

The aim of this section is to complete the proof of Theorem 1.3. To do this, we prove a lower bound for the height $H(C)$ in terms of the discriminant of Jac$(C)$ and then use the same basic dichotomy as in the two articles [4] and [7]. For curves of small height we use descent and the determinant method. To sum over the descent classes we need upper estimates for the rank of Jac$(C)$ in terms of its discriminant. For curves of large height we use a refinement of the determinant method where we make use of extra factors in the determinant which come from the coefficients of the quadratic forms defining $C$.

Let $C$ be a curve as in Theorem 1.3, the discriminant $D$ of Jac$(C)$ can be computed by means of the formulas in [1, Sections 3.1 and 3.3]. This gives
$$D=\displaystyle 2^{-8}\prod_{0\leq i\neq j\leq 3}(a_ib_j-a_jb_i).$$
If $C$ is defined by a primitive pair of quadratic forms, we have therefore
\begin{equation}
|D|\leq H(C)^{12}.
\end{equation}
We now use a standard 2-descent argument as in Brumer - Kramer [3] to bound the rank $r$ of Jac($C$) in terms of $|D|$. One can prove that for any $c>1/(2\log 2)$ we have
$$r<c \text{ log}|D|+\text{O}_{\varepsilon}(1).$$
This is discussed by Ellenberg and Venkatesh [4, p. 2177]. In Theorem 1.1, if we take $m=2$ then
\begin{equation}
N(B)\ll 2^r B^{1/8}\log B \ll_{\varepsilon} |D|^{1/2+\varepsilon}B^{1/8}\log B.
\end{equation}
From (12) and (13) we obtain that
\begin{equation}
N(B)\ll_{\varepsilon}H(C)^{6+\varepsilon}B^{1/8}\log B.
\end{equation}
Comparing (14) with Theorem 3.1 we see that the worst case is that in which \linebreak $H(C)=B^{3/49}$. We then obtain Theorem 1.3.

\section{\textbf{Proof of Lemma 2.3}}
We shall in this section prove the remaining Lemma 2.3. For any positive integers $a,b$, we denote by $(a,b)$ the divisor $aH+bH'$, where $H$ and $H'$ are the varieties in $\mathbb{P}^3 \times \mathbb{P}^3$ given by $x_0=0$ and $y_0=0$ respectively. We also recall that for any point $R\in C$,
$$X_R=\{(P,Q)\in C\times C : [P]=m[Q]-(m-1)[R]\}.$$
We first need the following result.

\begin{lem} Let $R$ be a point of $C$ and suppose that $a,b$ and $m$ are positive integers satisfying $\frac{1}{a}+\frac{m^2}{b}<4.$ Then the restriction of global sections
$$H^0 (\mathbb{P}^3 \times \mathbb{P}^3, \mathcal{O}_{\mathbb{P}^3 \times \mathbb{P}^3}(a,b))\rightarrow H^0 (X_R, \mathcal{O}_{X_R}(a,b))$$
is surjective and the dimension of $H^0 (X_R, \mathcal{O}_{X_R}(a,b))$ is $4(m^2 a+b).$ \end{lem}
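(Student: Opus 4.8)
The plan is to identify $X_R$ with a smooth genus-$1$ curve sitting as a divisor in the abelian surface $S:=C\times C$, compute its numerical class, and then read off both assertions from intersection theory and standard vanishing theorems. Since $R\in C(\mathbb{Q})$, the Abel--Jacobi map $P\mapsto[P]-[R]$ is an isomorphism $C\xrightarrow{\sim}\mathrm{Jac}(C)=:E$ defined over $\mathbb{Q}$, carrying $R$ to the origin; under it the defining relation $\psi(P,R)=m\psi(Q,R)$ becomes $u=mv$, where $u,v$ are the images of $P,Q$. Thus the second projection $\mathrm{pr}_2\colon X_R\to C$ is an isomorphism (a degree-$1$ divisor class on a genus-$1$ curve has a unique effective representative), while $\mathrm{pr}_1\colon X_R\to C$ is the degree-$m^2$ multiplication-by-$m$ map. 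In particular $X_R$ is a smooth irreducible curve of genus $1$, hence an effective divisor on the smooth surface $S$.

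I would factor the restriction map through $S$:
$$H^0\bigl(\mathbb{P}^3\times\mathbb{P}^3,\mathcal{O}(a,b)\bigr)\to H^0\bigl(S,\mathcal{O}_S(a,b)\bigr)\to H^0\bigl(X_R,\mathcal{O}_{X_R}(a,b)\bigr).$$
The first arrow is surjective: $C$ is a complete intersection, hence projectively normal, so $H^0(\mathbb{P}^3,\mathcal{O}(k))\to H^0(C,\mathcal{O}_C(k))$ is onto for every $k$, and by the K\"unneth formula the first arrow is the tensor product of two such surjections. For the second arrow, write $D:=\mathcal{O}_S(a,b)$ and use the ideal-sheaf sequence of the divisor $X_R\subset S$,
$$0\to\mathcal{O}_S(D-X_R)\to\mathcal{O}_S(D)\to\mathcal{O}_{X_R}(D)\to0,$$
which reduces surjectivity to the vanishing $H^1\bigl(S,\mathcal{O}_S(D-X_R)\bigr)=0$.

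Everything then comes down to intersection theory on $S$. Writing $H,H'$ for the two hyperplane classes restricted to $S$, one has $(H|_S)^2=(H'|_S)^2=0$, $H|_S\cdot H'|_S=16$, while $X_R\cdot H|_S=4m^2$, $X_R\cdot H'|_S=4$, and $X_R^2=0$ (the last by adjunction, since $K_S=0$ and $X_R$ has genus $1$). Setting $L:=D-X_R$ with $D=aH|_S+bH'|_S$, a direct computation gives
$$L^2=32ab-8(am^2+b)=8\,(4ab-am^2-b),$$
so that $L^2>0$ is \emph{exactly} the hypothesis $\tfrac1a+\tfrac{m^2}{b}<4$. Moreover $H|_S+H'|_S$ is ample and $L\cdot(H|_S+H'|_S)=16(a+b)-4m^2-4>0$ under the same hypothesis (using $4b>m^2$ and $a\geq1$). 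Since $S$ is an abelian-surface torsor, its ample cone is one component of the positive cone $\{x:x^2>0\}$; hence $L^2>0$ together with $L\cdot(\text{ample})>0$ forces $L$ into the ample component, and Kodaira vanishing (equivalently Mumford's index theorem, using $K_S=0$) yields $H^1(S,L)=0$. This establishes the surjectivity.

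Finally, for the dimension I would apply Riemann--Roch on $X_R\cong C$: the degree of $\mathcal{O}_{X_R}(a,b)$ is $D\cdot X_R=4(am^2+b)=4(m^2a+b)>0$, so on the genus-$1$ curve $X_R$ its $h^0$ equals its degree, giving $\dim H^0(X_R,\mathcal{O}_{X_R}(a,b))=4(m^2a+b)$. As a consistency check, $h^0(S,D)=16ab$ by K\"unneth and $h^0(S,L)=\tfrac12 L^2$ by Riemann--Roch on the abelian surface, and the exact sequence returns the same value. I expect the genuine obstacle to be the vanishing $H^1(S,L)=0$: one must verify that $L$ lies in the \emph{ample} component of the positive cone rather than its negative, which is precisely where the inequality $\tfrac1a+\tfrac{m^2}{b}<4$ is consumed, and one must take slight care that the vanishing theorems are legitimately applied to the torsor $S$ (after base change to $\overline{\mathbb{Q}}$, say, as cohomology dimensions are insensitive to the base field).
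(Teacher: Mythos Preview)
Your proof is correct and follows the same architecture as the paper: factor the restriction through $S=C\times C$, kill $H^1\bigl(S,\mathcal{O}_S((a,b)-X_R)\bigr)$ via a vanishing theorem on the abelian surface after computing the relevant intersection numbers, and read off $h^0$ from Riemann--Roch on the genus-one curve $X_R\cong C$. The minor differences are that the paper descends from $\mathbb{P}^3\times\mathbb{P}^3$ to $C\times C$ in two separate steps (first to $C\times\mathbb{P}^3$, then to $C\times C$) using the Koszul resolution of the complete intersection and Kodaira vanishing on the ambient space, whereas you do this in one stroke via projective normality of $C$ and K\"unneth; and for the vanishing on $S$ the paper checks $L\cdot(0,1)>0$ and $L^2>0$ before invoking Kawamata--Viehweg, while you pair $L$ against the genuinely ample class $H|_S+H'|_S$ to place it in the ample component of the positive cone, which makes the appeal to Kawamata--Viehweg (or Mumford's index theorem) slightly more transparent. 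Both routes consume the hypothesis $\tfrac{1}{a}+\tfrac{m^2}{b}<4$ at exactly the same place, namely the positivity of $L^2=8(4ab-am^2-b)$.
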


It follows from the lemma that the quotient space $I_1/I_2$ defined before Lemma 2.3 may be identified with $H^0 (X_R, \mathcal{O}_{X_R}(a,b))$. It is thus a vector space of dimension $4(m^2a+b)$ spanned by bihomogeneous monomials of bidegree $(a,b).$ This completes the proof of Lemma 2.3.

\begin{proof} [Proof of Lemma 5.1.] We use arguments similar to those in the proof of Lemma 5.1 of Heath-Brown and Testa [7] where they proved a similar result for non-singular plane cubic curves in three steps.

Let $Y$ be a subvariety of $\mathbb{P}^3 \times \mathbb{P}^3$, $D\subset Y$ be an effective divisor and $\mathcal{L}$ be the restriction to $Y$ of the line bundle $\mathcal{O}_{\mathbb{P}^3 \times \mathbb{P}^3}(a,b)$. There is then a short exact sequence
\begin{equation}
0\rightarrow \mathcal{L}(-D)\rightarrow \mathcal{L}\rightarrow \mathcal{L}|_D\rightarrow 0
\end{equation}
of sheaves on $Y$. From the long exact cohomology sequence associated to (15), it follows that if the cohomology group $H^1 (Y,\mathcal{L}(-D))$ vanishes, then the restriction of global sections
$$H^0 (Y,\mathcal{L})\rightarrow H^0 (D, \mathcal{L}|_D)$$
is surjective.

\text{}\\
\textbf{Step 1:} from $\mathbb{P}^3 \times \mathbb{P}^3$ to $C\times \mathbb{P}^3.$ Let $\mathcal{I}_{C\times \mathbb{P}^3}$ be the ideal of functions on $\mathbb{P}^3 \times \mathbb{P}^3$ that vanish on $C\times \mathbb{P}^3$. The sequence (15) becomes
$$0\rightarrow \mathcal{I}_{C\times \mathbb{P}^3} \otimes \mathcal{O}_{\mathbb{P}^3 \times \mathbb{P}^3}(a,b)\rightarrow \mathcal{O}_{\mathbb{P}^3\times \mathbb{P}^3}(a,b)\rightarrow \mathcal{O}_{C\times \mathbb{P}^3}(a,b)\rightarrow 0.$$
The vanishing of $H^1 (\mathbb{P}^3 \times \mathbb{P}^3 , \mathcal{I}_{C\times \mathbb{P}^3}\otimes \mathcal{O}_{\mathbb{P}^3 \times \mathbb{P}^3}(a,b))$ can be obtained, if $a>0$ and $b>-4$, from a resolution of the ideal sheaf $\mathcal{I}_{C\times \mathbb{P}^3}$
\begin{equation}
0\rightarrow \mathcal{O}_{\mathbb{P}^3 \times \mathbb{P}^3}(-4,0)\rightarrow \mathcal{O}_{\mathbb{P}^3 \times \mathbb{P}^3}(-2,0)\oplus \mathcal{O}_{\mathbb{P}^3 \times \mathbb{P}^3}(-2,0) \rightarrow \mathcal{I}_{C\times \mathbb{P}^3}\rightarrow 0.
\end{equation}
Indeed, taking the long cohomology sequence associated to (16) (after tensoring with $\mathcal{O}_{\mathbb{P}^3 \times \mathbb{P}^3}(a,b)$) we get
$$...\rightarrow H^1 (\mathbb{P}^3 \times \mathbb{P}^3 , \mathcal{O}_{\mathbb{P}^3 \times \mathbb{P}^3}(a-4,b))\rightarrow H^1 (\mathbb{P}^3 \times \mathbb{P}^3 , \mathcal{O}_{\mathbb{P}^3 \times \mathbb{P}^3}(a-2,b)\oplus \mathcal{O}_{\mathbb{P}^3 \times \mathbb{P}^3}(a-2,b))\rightarrow $$
\begin{equation}
\rightarrow H^1 (\mathbb{P}^3 \times \mathbb{P}^3 , \mathcal{I}_{C\times \mathbb{P}^3}\otimes\mathcal{O}_{\mathbb{P}^3 \times \mathbb{P}^3}(a,b))\rightarrow H^2 (\mathbb{P}^3 \times \mathbb{P}^3 , \mathcal{O}_{\mathbb{P}^3 \times \mathbb{P}^3}(a-4,b))\rightarrow ...
\end{equation}
The vanishing of the 1st, 2nd and the last terms of (17) follows from the Kodaira Vanishing Theorem [10] if $a>0$ and $b>-4$. We thus obtain, if $a>0$ and $b>-4,$ the vanishing of $H^1 (\mathbb{P}^3 \times \mathbb{P}^3 , \mathcal{I}_{C\times \mathbb{P}^3}\otimes\mathcal{O}_{\mathbb{P}^3 \times \mathbb{P}^3}(a,b)).$

\text{}\\
\textbf{Step 2:} from $C\times \mathbb{P}^3$ to $C\times C.$ As above, $H^1 (C\times \mathbb{P}^3, \mathcal{I}_{C\times C}\otimes\mathcal{O}_{C\times \mathbb{P}^3}(a,b))$ vanishes if $a>0$ and $b>0$.

\text{}\\
\textbf{Step 3:} from $C\times C$ to $X_R.$ The curve $X_R$ is a divisor on $C\times C$ and (15) becomes
$$0\rightarrow \mathcal{O}_{C\times C}((a,b)-X_R)\rightarrow \mathcal{O}_{C\times C}(a,b)\rightarrow \mathcal{O}_{X_R}(a,b) \rightarrow 0$$
in this case. Note that $C\times C$ is isomorphic to an abelian surface over $\overline{\mathbb{Q}}$ and therefore every effective divisor on $C\times C$ is nef. The vanishing of the group
$$H^1 (C\times C , \mathcal{O}_{C\times C}((a,b)-X_R))$$
is thus a consequence of the Kawamata-Viehweg Vanishing Theorem [9,16] if the \linebreak inequalities $(0,1)((a,b)-X_R)>0$ and $((a,b)-X_R)^2>0$ hold. We have
$$(0,1)((a,b)-X_R)=a(0,1)(1,0)+b(0,1)^2-(0,1)X_R=4(4a-1).$$
Here we use the facts that $(0,1)(1,0)=16$ since a general hyperplane in $\mathbb{P}^3$ intersects $C$ in four points, that $(0,1)^2=0$ as a general line in $\mathbb{P}^3$ is disjoint from $C$ and that $(0,1)X_R=4$ since for a fixed point $Q$ on $C$ there is a unique pair $(P,Q)$ on $X_R$.

To compute $((a,b)-X_R)^2,$ we observe that $(1,0)X_R=4m^2$ since for a fixed point $P$ on $C$ there are $m^2$ pairs $(P,Q)$ on $X_R$. Moreover, $(X_R)^2=0$ since for all $R,R'\in C$ the curves $X_R$ and $X_{R'}$ are algebraic equivalent and if $(m-1)([R]-[R'])\neq 0,$ then the curves $X_R$ and $X_{R'}$ are disjoint. Hence
$$((a,b)-X_R)^2=(a,b)^2-2(a,b)X_R+(X_R)^2=8ab\left(4-\frac{1}{a}-\frac{m^2}{b}\right)$$
and the first part of the lemma is obtained.

We now compute the dimension of $H^0 (X_R, \mathcal{O}_{X_R}(a,b))$. Here $X_R$ is smooth of genus one since the projection of the curve $X_R \subset C\times C$ onto the second factor is an isomorphism. As the line bundle $\mathcal{O}_{X_R}(a,b)$ on $X_R$ has degree
\begin{equation}
X_R.(a,b)=4(m^2 a +b)>0,
\end{equation}
we get that $H^1 (X_R, \mathcal{O}_{X_R}(a,b))=0$ and then from the Riemann-Roch formula that the dimension of $H^0 (X_R, \mathcal{O}_{X_R}(a,b))$ is $4(m^2 a+b).$ This completes the proof of Lemma 5.1.
\end{proof}

\section*{\textbf{Acknowledgement}}

I would like to thank my supervisor Per Salberger for introducing me to the problem and giving me many important ideas and comments. I am also grateful to Dennis Eriksson for useful discussions.

\newpage


\begin{thebibliography}{99} 


\bibitem{Kol Fo00}   {\sc } S. Y. An, S. Y. Kim, D. C. Marshall, S. H. Marshall, W. G. McCallum and A. R. Perlis,
{\it Jacobians of genus one curves}, J. Number Theory \textbf{90}, 2001, no. 2, 304-315.
\bibitem{Kol Fo00}   {\sc } N. Broberg,
{\it A note on a paper by R. Heath-Brown: "The density of rational points on curves and surfaces"}, J. reine. angew. Math. \textbf{571}, 2004, 159-178.
\bibitem{Kol Fo00}   {\sc } A. Brumer and K. Kramer,
{\it The rank of elliptic curves}, Duke Math. J. \textbf{44}, 1977, no.4, 715-743.

\bibitem{Kol Fo00}   {\sc } J. Ellenberg and A. Venkatesh,
{\it On uniform bounds for rational points on nonrational curves}, Int. Math. Res. Not. \textbf{35}, 2005, 2163-2181.
\bibitem{Kol Fo00}   {\sc } R. Hartshorne,
{\it Algebraic geometry}, Graduate Texts in Math., Springer, New York, 1977.

\bibitem{Kol Fo00}   {\sc } D. R. Heath-Brown,
{\it The density of rational points on curves and surfaces}, Ann. of Math. (2) \textbf{155}, 2002,
553-595.
\bibitem{Kol Fo00}   {\sc } D. R. Heath-Brown, D. Testa,
{\it Counting rational points on cubic curves}, Sci. China Math., \textbf{53}, 2010, No. 9, 2259-2268.
\bibitem{Kol Fo00}   {\sc } W. V. D. Hodge, D. Pedoe,
{\it Methods of algebraic geometry, Vol. 1}, Cambridge university press, 1953.
\bibitem{Kol Fo00}   {\sc } Y. Kawamata,
{\it A genelization of Kodaira-Ramanujam's vanishing theorem}, Math. Ann., \textbf{261}, 1982, 43-46.
\bibitem{Kol Fo00}   {\sc } K. Kodaira,
{\it On a differential-geometric method in the theory of analytic stacks}, Proc. Nat. Acad. Sci. USA, \textbf{39}, 1953, 1268-1273.
\bibitem{Kol Fo00}   {\sc } P. Salberger,
{\it On the density of rational and integral points on algebraic varieties}, J. Reine Angew. Math. \textbf{606}, 2007, 123-147.
\bibitem{Kol Fo00}   {\sc } P. Salberger,
{\it Rational points of bounded height on projective surfaces}, Math. Zeitschrift \textbf{258}, 2008, 805-826.
\bibitem{Kol Fo00}   {\sc } P. Salberger,
{\it Counting rational points on projectve varieties}, preprint.

\bibitem{Kol Fo00}   {\sc } G. Tenenbaum,
{\it Introduction to analytic and probabilistic number theory}, Cambridge university press, 1975.

\bibitem{Kol Fo00}   {\sc } M. H. Tran,
{\it Counting rational points on smooth cubic curves}, ArXiv:1605.07856v2.
\bibitem{Kol Fo00}   {\sc } E. Viehweg,
{\it Vanishing theorems}, J. Reine. Angew. Math., \textbf{335}, 1982, 1-8.


\end{thebibliography}
\end{document}